\title[Energy of spherical designs]{\bf Universal upper and lower bounds on energy of spherical designs}
\def\ds{\displaystyle}
\date{\today}
\newtheorem{theorem}{Theorem}[section]
\newtheorem{lemma}[theorem]{Lemma}
\newtheorem{corollary}[theorem]{Corollary}
\theoremstyle{definition}
\newtheorem{defn}[theorem]{Definition}
\newtheorem{example}[theorem]{Example}
\newtheorem{remark}[theorem]{Remark}
\newcommand{\Sp}{\mathbb{S}}
\author[P. Boyvalenkov]{P. G. Boyvalenkov $^\dagger$}
\address{Institute of Mathematics and Informatics, Bulgarian Academy of Sciences,
8 G Bonchev Str.,
1113  Sofia, Bulgaria \\
and Faculty of Mathematics and Natural Sciences, South-Western University, Blagoevgrad, Bulgaria.
}
\email{peter@math.bas.bg}
\thanks{\noindent $^\dagger$ The research of this author was supported, in part, by a Bulgarian NSF contract I01/0003.}
\author[P. Dragnev]{P. D. Dragnev $^{\dagger \dagger}$}
\address{Department of Mathematical Sciences,
Indiana-Purdue University
Fort Wayne, IN 46805, USA }
\email{dragnevp@ipfw.edu}
\thanks{\noindent $^{\dagger \dagger}$ The research of this author was supported, in part, by a Simons Foundation grant no. 282207.}
\author[D. Hardin]{D. P. Hardin$^*$}
\address{Center for Constructive Approximation, Department of Mathematics, \hspace*{.1in}
Vanderbilt University,
Nashville, TN 37240, USA  }
\email{doug.hardin@vanderbilt.edu}
\author[E. Saff]{E. B. Saff$^*$}
\email{edward.b.saff@vanderbilt.edu}
\thanks{\noindent $^*$ The research of these authors was supported, in part,
by the U. S. National Science Foundation under grants    DMS-1412428 and DMS-1516400.
}
\author[M. Stoyanova]{M. M. Stoyanova$^{**}$}
\address{Faculty of Mathematics and Informatics,
Sofia University,
5 James Bourchier Blvd.,
1164 Sofia, Bulgaria}
\email{stoyanova@fmi.uni-sofia.bg}
\thanks{
\noindent $^{**}$ The research of this author was supported, in part, by the Science Foundation of Sofia University under contract 015/2014 and 144/2015.
}
\thanks{The authors express their gratitude to  the Institute of Mathematics and Informatics at the Bulgarian Academy of Sciences and the Erwin Schr\"{o}dinger International Institute for providing  stimulating environments for collaborative research.}
\begin{document}

\begin{abstract}
Linear programming (polynomial) techniques are used to obtain lower and upper bounds
for the potential energy of spherical designs. This approach gives unified bounds
that are valid for a large class of potential functions. Our lower bounds are optimal for absolutely monotone potentials in the sense
that for the linear programming technique they cannot be improved by using polynomials of the same or lower degree. When
additional information about the structure (upper and lower bounds for the inner products) of the designs is known, improvements on the bounds are obtained. Furthermore, we provide `test functions'  for determining when the linear programming lower bounds for energy can be improved  utilizing higher degree polynomials. We also provide some asymptotic results for these energy bounds.
\end{abstract}

\keywords{minimal energy problems, spherical potentials, spherical codes and designs, Levenshtein bounds, Delsarte-Goethals-Seidel bounds, linear programming}
\subjclass[2010]{74G65, 94B65 (52A40, 05B30)}

\maketitle
\section{Introduction}


Let $\mathbb{S}^{n-1}$ be the unit sphere in $\mathbb{R}^n$.
We refer to a finite set $C \subset \mathbb{S}^{n-1}$ as a {\em spherical code}
and, for a given (extended real-valued) function $h:[-1,1] \to [0,+\infty]$, we consider
the {\em $h$-energy} (or the potential energy) of $C$ defined by
\begin{equation}
E(n,C;h):=\sum_{x, y \in C, x \neq y} h(\langle x,y \rangle),
\end{equation}
where $\langle x,y \rangle$ denotes the inner product of $x$ and $y$.
At times we shall require $h$ to be {\em absolutely monotone} or {\em strictly absolutely monotone} on $[-1,1)$; i.e.,
its $k$-th derivative satisfies $h^{(k)}(t) \geq 0$ ($h^{(k)}(t)>0$, resp.) for all $k\ge 0$ and $t\in[-1,1)$.

A spherical $\tau$-design $C \subset \mathbb{S}^{n-1}$, whose cardinality we denote by $|C|$,  is a spherical code such that
\begin{eqnarray*}
\frac{1}{\mu(\mathbb{S}^{n-1})} \int_{\mathbb{S}^{n-1}} f(x) d\mu(x)= \frac{1}{|C|} \sum_{x \in C} f(x)
\end{eqnarray*}
($\mu(x)$ is the surface area measure) holds for all polynomials $f(x) = f(x_1,x_2,\ldots,x_n)$ of total degree at most $\tau$.
The maximal number $\tau = \tau(C)$ such that $C$ is a spherical $\tau$-design is called the {\em strength} of $C$.

A commonly arising problem is to estimate the potential energy of certain sets of codes $C.$
In this paper we address this problem
for the class of spherical designs of fixed dimension, strength and cardinality.
Denote by
\begin{equation}\label{LE} \mathcal{L}(n,N,\tau;h):=\inf \{E(n,C;h):|C|=N, \ C \subset \mathbb{S}^{n-1} \mbox{ is a $\tau$-design}\} \end{equation}
and
\begin{equation}\label{UE} \mathcal{U}(n,N,\tau;h):=\sup \{E(n,C;h):|C|=N, \ C \subset \mathbb{S}^{n-1} \mbox{ is a $\tau$-design}\} \end{equation}
the minimum and the maximum possible $h$-energy of a spherical $\tau$-design
of $N$ points on $\mathbb{S}^{n-1}$, respectively. In this paper we derive lower bounds on $\mathcal{L}(n,N,\tau;h)$
and upper bounds on $\mathcal{U}(n,N,\tau;h)$, which then define a strip where the energies of all
designs of fixed dimension, strength and cardinality lie (see Theorem \ref{Thm3.7}).

Concerning lower bounds for energy,
a general linear programming technique  originally introduced by Delsarte \cite{D1} for investigating codes over finite fields (see also \cite{KL}) has  been utilized
 by Yudin \cite{Y}, Kolushov and Yudin \cite{KY} and Andreev \cite{And} (see also \cite{And96,And99,KY94}) to prove the optimality of certain spherical codes among all possible codes.  In 2007, Cohn and Kumar augmented this technique by introducing the notion of \emph{conductivity} to prove the universal optimality of \emph{sharp}\footnote{A code is \emph{sharp} if for some positive integer $m$, the code is a  $2m-1$ design with at most $m$ different values of the distance between distinct points in the code.} codes.  By \emph{universal optimality of a code} we mean that among all codes of the same cardinality, it  minimizes  the energy for all absolutely monotone potentials.  Essential to their method are certain quadrature rules associated with those sharp codes.    Here, by combining the Delsarte method with quadrature formulas developed by Levenshtein, we derive lower bounds for general designs that are optimal for the linear programming
technique when restricted to the use of polynomials of fixed maximal degree.  

 Since the collection of spherical designs is a special subclass of codes for a given dimension and cardinality, we expect that both larger lower bounds and lower upper bounds are possible when compared with minimal energy codes. Indeed we provide such examples in Section 5, where we discuss Mimura 2-designs (see Example~5.4).  Our general upper bounds for $\mathcal{U}(n,N,\tau;h)$ are given in Theorems~\ref{Thm3.6} and \ref{Thm3.7} in Section 3.

We remark that upper bounds for the Riesz $s$-energy of well-separated spherical designs of asymptotically optimal cardinality on $\mathbb{S}^2$ (that is, $|C|=\mathcal{O}(\tau^2)$ as $\tau\to \infty$)  were obtained by Hesse and Leopardi in \cite{H, HL}. The existence of such well-separated  designs of asymptotically optimal cardinality on $\mathbb{S}^{2}$ (and more generally on  $\mathbb{S}^{n-1}$)  has been established recently by Bondarenko, Radchenko, and Vyazovska in \cite{BRV2}.

An outline of our paper is as follows. Some preliminaries are explained in Section 2, where we refer to results and techniques developed by
Delsarte, Goethals and Seidel \cite{DGS} and Levenshtein \cite{Lev2,Lev3,Lev} that will be needed for the statements of our main results.
The relationship between the Delsarte-Goethals-Seidel bounds for the minimum possible size of spherical designs
of prescribed dimension and strength and the Levenshtein bounds for the maximum size of spherical codes
of prescribed dimension and minimum distance will play a very important role in our investigation. Some results on
the structure of designs of fixed dimension, strength and cardinality from \cite{BBD,BBKS} are also discussed.

In Section 3 we formulate two general results that provide the framework for obtaining lower and upper bounds for the energy of spherical designs.
Theorem~\ref{Thm3.1} (lower bounds) is a slight modification of a known result (cf. \cite[Proposition 4.1]{CK},
\cite[Chapter 5]{BHSbook}), but Theorem ~\ref{Thm3.6} (upper bounds) is new.

Theorem~\ref{Thm3.4} gives lower bounds   that are optimal in the sense described in Theorem~\ref{Thm3.5}
-- they cannot be improved by utilizing polynomials of the same or lower degree that satisfy the conditions of Theorem~\ref{Thm3.1}.
Following Levenshtein \cite{Lev} we call these bounds {\em universal}.

Some of the lower bounds from Theorem~\ref{Thm3.4} can be further improved by either restricting the interval containing the inner products of even-strength designs (Theorems~\ref{Thm4.1} and \ref{Thm4.2}) or by allowing polynomials of higher degree for odd-strength designs (Theorems~\ref{Thm4.3} and \ref{Thm4.4} and Corollary~\ref{Cor4.5}). For the latter case,  Theorem~\ref{Thm4.3} provides necessary and sufficient conditions for the global optimality of the bounds from Theorem~\ref{Thm3.4}.

Section 5 is devoted to improving upper bounds for spherical designs utilizing restrictions on their inner products. Some asymptotic results and numerical examples that illustrate these upper bounds are also included.

Finally, in Section 6 we derive an asymptotic lower bound for the energy of spherical designs of fixed strength as the dimension and cardinality grow to infinity in certain relation. An example (Euclidean realization of the Kerdock codes \cite{Ker72}) illustrating the tightness of these asymptotic bounds is presented.

\section{Preliminaries}
The results to be presented in Sections 3-7 utilize the notations and fundamental facts described in the subsections below.
\subsection{Gegenbauer polynomials and the linear programming framework}

For fixed dimension $n$, the Gegenbauer polynomials \cite{Sze} are
defined by  $P_0^{(n)}(t)=1$, $P_1^{(n)}(t)=t$ and the three-term recurrence relation
\[ (i+n-2)\, P_{i+1}^{(n)}(t)=(2i+n-2)\, t\, P_i^{(n)}(t)-i\, P_{i-1}^{(n)}(t)
                \mbox{ for } i \geq 1. \]
We note that $\{P_i^{(n)}(t)\}$ are orthogonal in $[-1,1]$ with a weight $(1-t^2)^{(n-3)/2}$ and
satisfy $P_i^{(n)}(1)=1$ for all $i$ and $n$.
If $f(t) \in \mathbb{R}[t]$ is a real polynomial of degree $r$, then
$f(t)$ can be uniquely expanded in terms of the Gegenbauer
polynomials as
\begin{equation}\label{GegenbauerExpantion}
f(t) = \sum_{i=0}^r f_iP_i^{(n)}(t).
\end{equation}

We use the identity (see, for example, \cite[Corollary 3.8]{DGS}, \cite[Equation (1.7)]{Lev2}, \cite[Equation (1.20)]{Lev3})
\begin{equation}
  \label{main}
  |C|f(1)+\sum_{x,y\in C, x \neq y} f(\langle x,y\rangle)
      = |C|^2f_0 + \sum_{i=1}^r \frac{f_i}{r_i} \sum_{j=1}^{r_i}
        \left ( \sum_{x\in C} Y_{ij}(x) \right )^2
\end{equation}
as a source of estimations by polynomial techniques.
Here $C \subset \mathbb{S}^{n-1}$ is a spherical code, $f$ is as in \eqref{GegenbauerExpantion},
$\{ Y_{ij}(x) : j=1,2,\ldots,r_i\}$ is an orthonormal basis of the
space $\mathrm{Harm}(i)$ of homogeneous harmonic polynomials of
degree $i$ and $r_i=\dim \,\mathrm{Harm}(i)$.

The Delsarte-Goethals-Seidel bound  and the Levenshtein bound described in the next subsections are obtained after the sums on both sides of (\ref{main}) are neglected for suitable polynomials.

\subsection{Delsarte-Goethals-Seidel bound for spherical designs}

Denote by $D(n,\tau)$ the Delsarte-Goethals-Seidel \cite{DGS} bound for spherical designs
\begin{equation}
\label{DGS-bound}
B(n,\tau) \geq D(n,\tau) := \left\{ \begin{array}{l}
 \ds 2\binom{n+k-2}{n-1}, \mbox{ if $\tau=2k-1$,} \\[12pt]
 \ds \binom{n+k-1}{n-1}+\binom{n+k-2}{n-1}, \mbox{ if  $\tau=2k$},
\end{array}
  \right.
\end{equation}
where $B(n,\tau):=\min\{|C|: C \subset \mathbb{S}^{n-1} \mbox{ is a spherical $\tau$-design}\}$.
This bound plays an important role in our (initially heuristic)  choice of
 applications of Theorems~\ref{Thm3.1} and \ref{Thm3.6}.

We shall utilize the values of the function $D(n,\tau)$ to decide the degrees of the polynomials
to be used for the bounding of $\mathcal{L}(n,N,\tau;h)$ and $\mathcal{U}(n,N,\tau;h)$.
The rule is the following -- if we have dimension $n$, strength $\tau$ and cardinality
$N \in \left(D(n,\tau),D(n,\tau+1)\right]$, then we use polynomials
of degree $\tau$ for the lower bounds and $\tau$ or $\tau-1$ (depending on the parity of $\tau$)
for the upper bounds.

\subsection{Levenshtein bounds for spherical codes}

We now formulate and discuss the Levenshtein bounds \cite{Lev79,Lev2,Lev3,Lev} on
\begin{equation} \label{LevB}
A(n,s):=\max\{|C|: C \subset \mathbb{S}^{n-1}, \langle x,y \rangle \leq s \mbox{ for all } x,y \in C, x \neq y\},
\end{equation}
the maximal possible cardinality of a spherical code on $\mathbb{S}^{n-1}$ of prescribed maximal
inner product $s$.

For every positive integer $m$ we consider the intervals
\begin{eqnarray*}
  \mathcal{I}_m :=
\left\{
\begin{array}{ll}
    \left [ t_{k-1}^{1,1},t_k^{1,0} \right ], & \mbox{if } m=2k-1, \\[8pt]
    \left [ t_k^{1,0},t_k^{1,1} \right ],      & \mbox{if } m=2k. \\
  \end{array}\right.
\end{eqnarray*}
Here $t_0^{1,1}=-1$ by definition, $t_i^{a,b}$, $a,b \in \{0,1\}$, $i \geq 1$, is the largest zero of the Jacobi polynomial $P_i^{(a+\frac{n-3}{2},b+\frac{n-3}{2})}(t)$. The intervals $\{\mathcal{I}_m\}_{m=1}^\infty$ are well defined
(see \cite[Lemmas 5.29 and 5.30]{Lev}) and therefore constitute a partition of $\mathcal{I}=[-1,1)$ into
countably many closed subintervals with nonoverlapping interiors.

For every $s \in \mathcal{I}_m$, Levenshtein introduces a certain polynomial
$f_m^{(n,s)}(t)$ of degree $m$ that satisfies all the conditions of the corresponding
linear programming bounds for spherical codes and yields the bound
\begin{equation}
\label{L_bnd}
 A(n,s) \leq
\left\{
\begin{array}{ll}
    L_{2k-1}(n,s) := {k+n-3 \choose k-1}
         \big[ \frac{2k+n-3}{n-1} -
          \frac{P_{k-1}^{(n)}(s)-P_k^{(n)}(s)}{(1-s)P_k^{(n)}(s)}
         \big] \mbox{ for } s \in \mathcal{I}_{2k-1}, \\[10pt]
    L_{2k}(n,s) := {k+n-2 \choose k}
        \big[ \frac{2k+n-1}{n-1} -
           \frac{(1+s)( P_k^{(n)}(s)-P_{k+1}^{(n)}(s))}
    {(1-s)(P_k^{(n)}(s)+P_{k+1}^{(n)}(s))} \big] \mbox{ for } s \in \mathcal{I}_{2k}. \cr
    \end{array}\right.
\end{equation}
The function
\begin{equation}\label{Lns} L(n,s) :=
\left\{
\begin{array}{ll}
    L_{2k-1}(n,s) & \mbox{ for } s \in \mathcal{I}_{2k-1}, \\[6pt]
   L_{2k}(n,s)   & \mbox{ for } s \in \mathcal{I}_{2k} \cr
    \end{array}\right.
\end{equation}
is continuous in $s$.

The connections between the Delsarte-Gothals-Seidel bounds and the Levenshtein bounds
are given by the equalities
\begin{equation}
\label{L-DGS1}
L_{2k-2}(n,t_{k-1}^{1,1})=
L_{2k-1}(n,t_{k-1}^{1,1}) = D(n,2k-1) = 2{n+k-2 \choose n-1},
\end{equation}
\begin{equation}
\label{L-DGS2}
  L_{2k-1}(n,t_k^{1,0})=
L_{2k}(n,t_k^{1,0}) = D(n,2k) = {n+k-1 \choose n-1}+{n+k-2 \choose n-1}
\end{equation}
occurring at the ends of the intervals $\mathcal{I}_m$.

\subsection{A useful quadrature}

It follows from \cite[Section 4]{Lev} (see also \cite{Lev3,BBD}) that
for every fixed (cardinality) $N > D(n,2k-1)$ there exist
uniquely determined real numbers $-1 < \alpha_0 < \alpha_1 <
\cdots <\alpha_{k-1} < 1$ and $\rho_0,\rho_1,\ldots,\rho_{k-1}$,
$\rho_i>0$ for $i=0,1,\ldots,k-1$, such that the quadrature formula
\begin{equation}
\label{defin_f0.1}
f_0=\frac{\Gamma(n-1)}{2^{n-2}\Gamma(\frac{n-1}{2})^2}\int_{-1}^1 f(t)(1-t^2)^{\frac{n-3}{2}}\, dt= \frac{f(1)}{N}+ \sum_{i=0}^{k-1} \rho_i f(\alpha_i)
\end{equation}
holds for every real polynomial $f(t)$ of degree at most $2k-1$.

The number $\alpha_{k-1}=\alpha_{k-1}(N)$ in \eqref{defin_f0.1} is the solution of the equation $L(n,s)=N$ for $s>t_{k-1}^{1,1}$. Once it is determined, it is known (and easily verified) that the remaining $\alpha_i=\alpha_i (N)$, $i=0,1,\ldots,k-2$, are roots of the
equation $$P_k(t)P_{k-1}(\alpha_{k-1}) - P_k(\alpha_{k-1})P_{k-1}(t)=0,$$
where $P_i(t)=P_i^{(n-1)/2,(n-3)/2}(t)$ is a Jacobi polynomial of degree $i$.
In fact, $\alpha_i$, $i=0,1,\ldots,k-1$,
are the roots of the Levenshtein polynomial \cite[Eqs. (5.81) and (5.82)]{Lev}
(see also \cite[Theorem 5.39]{Lev})
\[ f_{2k-1}^{(n,\alpha_{k-1})}(t)=(t-\alpha_{k-1})\prod_{i=0}^{k-2} (t-\alpha_i)^2 \]
used for obtaining the bound $L_{2k-1}(n,s)$, $s=\alpha_{k-1}$, in (\ref{L_bnd}).

Similarly, for every fixed $N > D(n,2k)$ there exist
uniquely determined real numbers $-1=\beta_0 < \beta_1 < \cdots <\beta_k < 1$
and $\gamma_0,\gamma_1,\ldots,\gamma_k$, $\gamma_i>0$ for $i=0,1,\ldots,k$, such that the
quadrature formula
\begin{equation}
\label{defin_f0.2}
f_0= \frac{f(1)}{N}+ \sum_{i=0}^{k} \gamma_i
f(\beta_i)
\end{equation}
is true for every real polynomial $f(t)$ of degree at most $2k$.
The numbers $\beta_i=\beta_i(N)$, $i=0,1,\ldots,k$, are the roots of the Levenshtein
polynomial
\[ f_{2k}^{(n,\beta_{k})}(t)=(t-\beta_0)(t-\beta_k)\prod_{i=1}^{k-1} (t-\beta_i)^2 \]
(used for $L_{2k}(n,s)$, $s=\beta_k$, in (\ref{L_bnd})).

As mentioned in the Introduction we always take into consideration where the cardinality $N$ is located
with respect to the Delsarte-Goethals-Seidel bound.
It follows from the properties of the bounds $D(n,\tau)$ and $L_{\tau}(n,s)$
(see (\ref{L-DGS1}) and (\ref{L-DGS2})) that
\[ N \in [D(n,\tau),D(n,\tau+1)] \iff s \in {\mathcal I}_{\tau}, \]
where $n$, $s$ and $N$ are connected by the equality
\[ N=L_{\tau}(n,s). \]
Therefore we can always associate $N$ with the corresponding numbers:
\begin{equation}
\label{alphadef} \alpha_0,\alpha_1,\ldots,\alpha_{k-1},\rho_0,\rho_1,\ldots,\rho_{k-1} \mbox{ when } N \in \left(D(n,2k-1),D(n,2k)\right]
\end{equation}
or with
\begin{equation}
\label{betadef} \beta_0,\beta_1,\ldots,\beta_{k},\gamma_0,\gamma_1,\ldots,\gamma_{k} \mbox{ when } N \in \left(D(n,2k),D(n,2k+1)\right]. 
\end{equation}

\subsection{Bounds on smallest and largest inner products of spherical designs}

Denote
\begin{equation}\label{unNtau} u(n,N,\tau):=\sup \{u(C):C \subset \mathbb{S}^{n-1} \mbox{ is a $\tau$-design}, |C|=N\},
\end{equation}
where $u(C):=\max \{\langle x,y \rangle : x,y \in C, x \neq y\}$, and
\begin{equation}\label{lnNtau}\ell(n,N,\tau):=\inf \{\ell(C):C \subset \mathbb{S}^{n-1} \mbox{ is a $\tau$-design}, |C|=N\}, \end{equation}
where $\ell(C):=\min \{\langle x,y \rangle : x,y \in C, x \neq y\}$.

For every $n$, $\tau$ and cardinality $N \in [D(n,\tau),D(n,\tau+1)]$ non-trivial upper bounds on $u(n,N,\tau)$
can be obtained (cf. \cite{BBD,BBKS}). Similarly, for even $\tau=2k$ and cardinality $N \in [D(n,2k),D(n,2k+1))$
non-trivial lower bounds on $\ell(n,N,2k)$ are possible \cite{BBD,BBKS}. We describe here explicitly
the cases $\tau=2$ and $\tau=4$.

%

In \cite{BBD,BBKS} the quantities $u(n,N,\tau)$ and $\ell(n,N,\tau)$ are estimated
by using the following equivalent definition of spherical designs, which is a consequence of \eqref{main}: a spherical $\tau$-design $C \subset \mathbb{S}^{n-1}$ is a spherical code such that
for any point $x \in \mathbb{S}^{n-1}$ and any real
polynomial of the form  $f(t)=\sum_{i=0}^{\tau} f_iP_i^{(n)}(t)$, the equality
\begin{equation}
\label{defin_f.3} \sum_{x \in C} f(\langle x,y \rangle ) = f_0|C|
\end{equation}
holds.

\medskip

\begin{lemma}[\cite{BBKS}]
\label{Lem2.1} Let $n\ge 3$.

\begin{enumerate}
\item[{\rm (a)}] For  every $N \in [D(n,2),D(n,3)]=[n+1,2n]$ we have
\begin{equation}\label{ubnd2} u(n,N,2) \leq \frac{N-2}{n}-1.
\end{equation}

\item[{\rm (b)}] For every  $N \in [D(n,4),D(n,5)]=[n(n+3)/2,n(n+1)]$ we have
\begin{equation}\label{ubnd4} u(n,N,4) \leq \frac{2(3+\sqrt{(n-1)[(n+2)N-3(n+3)]})}{n(n+2)}-1.
\end{equation}
\end{enumerate}
\end{lemma}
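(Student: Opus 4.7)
The plan is to pick $x_0, y_0 \in C$ with $\langle x_0, y_0\rangle = u$ and set $v := x_0 + y_0$, so $\|v\|^2 = 2(1+u)$. Using the $\tau$-design property applied with $f(t) = t^k$, $0 \le k \le \tau$, together with the spherical moment values $\mu_k = 0$ for odd $k$ and $\mu_{2m} = (2m-1)!!/\prod_{j=0}^{m-1}(n+2j)$, we have $\sum_{y\in C}\langle v,y\rangle^k = N\mu_k \|v\|^k$. Subtracting the contributions from $y = x_0$ and $y = y_0$, each of which equals $(1+u)^k$, yields the reduced moments $M_k := \sum_{y\in C\setminus\{x_0,y_0\}}\langle v,y\rangle^k$ as explicit polynomials in $w := 1+u$: $M_0 = N-2$, $M_1 = -2w$, $M_2 = 2w(N-nw)/n$, and (when $\tau = 4$) $M_3 = -2w^3$, $M_4 = 12Nw^2/(n(n+2))-2w^4$. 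Note that since $\mu_1 = \mu_3 = 0$, the odd reduced moments come solely from the subtracted $x_0, y_0$ contributions, so $M_3 = w^2 M_1$ identically.

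The key observation is that the Hankel matrix $H_\tau := (M_{i+j})_{i,j=0}^{\lfloor\tau/2\rfloor}$ is positive semidefinite: for any coefficients $(c_i)$,
\[
\sum_{i,j} c_i c_j M_{i+j} = \sum_{y\in C\setminus\{x_0,y_0\}}\Bigl(\sum_i c_i\langle v,y\rangle^i\Bigr)^2 \ge 0,
\]
so every leading principal minor of $H_\tau$, and in particular $\det H_\tau$, is nonnegative. For part (a) ($\tau = 2$), $H_2$ is $2\times 2$ and the determinant inequality reads $(N-2)\cdot 2w(N-nw)/n \ge 4w^2$; dividing by the positive $2w$ (else the bound is vacuous) and rearranging collapses this to $(N-2)N \ge nwN$, giving $w \le (N-2)/n$ and hence the stated bound.

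For part (b) ($\tau = 4$), $H_4$ is $3\times 3$, and $\det H_4\ge 0$ is a priori a degree-$6$ polynomial condition on $w$. Expanding along the first row and using the identity $M_3 = w^2 M_1$ triggers substantial cancellation; in particular the $w^3$-coefficient of the resulting bracket vanishes identically, and after regrouping one arrives at the factorization
\[
\det H_4 \;=\; \frac{4N^2 w^3}{n^3(n+2)}\Bigl[-n^2(n+2)w^2 + 12nw + 4\bigl((n-1)N - 3n\bigr)\Bigr].
\]
Requiring the bracketed quadratic in $w$ to be nonnegative is equivalent to $n(n+2)w^2 - 12 w + 12 \le 4(n-1)N/n$, whose discriminant factors cleanly as $16(n-1)[(n+2)N - 3(n+3)]$. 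Solving for the larger root of this quadratic and substituting back $u = w-1$ reproduces exactly the bound in (b).

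The main obstacle will be the algebraic reduction in (b): although the moments $M_k$ make $\det H_4$ look like a degree-$6$ polynomial in $w$, the specific identities among them (especially $M_3 = w^2 M_1$, together with the explicit forms of $M_2$ and $M_4$) produce enough cancellation to leave only a quadratic constraint, and one must track these cancellations carefully. Part (a) is essentially a one-line determinantal (equivalently Cauchy–Schwarz) application; the calculational heart of the lemma is entirely in part (b).
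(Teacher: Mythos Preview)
Your proof is correct and rests on the same underlying idea as the paper's: evaluate the design identity \eqref{defin_f.3} at (a scalar multiple of) the geodesic midpoint $y=(x_0+y_0)/\|x_0+y_0\|$ between a closest pair, and exploit positivity of sums of squares. The paper, following \cite{BBKS}, does this by exhibiting a specific square polynomial---for (a) explicitly $f(t)=\bigl(t+\sqrt{2/(n(N-2))}\bigr)^2$, and for (b) the degree-four analogue from the example after Theorem~3.2 of \cite{BBKS}---whereas you package all such squares at once into the Hankel moment matrix $(M_{i+j})$ and read off the bound from $\det H_\tau\ge 0$. The two presentations are equivalent: the extremal polynomial in the paper's approach is precisely the null vector of your Hankel matrix at the boundary value of $w$. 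Your route has the mild advantage that one need not guess the right $f$ in advance (and it renders the case-(b) algebra a mechanical determinant computation, which you carried out correctly---the factorization and discriminant check out), while the paper's route is shorter once the polynomial is known.
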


%

\begin{proof}
To prove (a) one can use the polynomial $f(t)=\left(t+\sqrt{\frac{2}{n(N-2)}}\right)^2$ in (\ref{defin_f.3})
with $y$ being the midpoint of the geodesic arc between two of the closest points in $C$ as in
Theorem 3.2 from \cite{BBKS}. The bound in (b) is proved in the example following Theorem 3.2 from \cite{BBKS}.
\end{proof}

\medskip

\begin{lemma}[\cite{BBKS}]
\label{Lem2.2} Let $n\ge 3$.
\begin{enumerate}
\item[{\rm (a)}] For every  $N \in [D(n,2),D(n,3))=[n+1,2n)$ we have
\begin{equation}\label{lbnd2} \ell(n,N,2) \geq 1-\frac{N}{n}.
\end{equation}
\item[{\rm (b)}] For every  $N \in [D(n,4),D(n,5))=[n(n+3)/2,n(n+1))$ we have
\begin{equation}\label{lbnd4}\ell(n,N,4) \geq 1-\frac{2}{n}\left(1+\sqrt{\frac{(n-1)(N-2)}{n+2}}\right).
\end{equation}
\end{enumerate}
\end{lemma}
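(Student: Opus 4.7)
The plan is to invoke the spherical-design identity \eqref{defin_f.3} with a cleverly chosen test point $y\in\mathbb{S}^{n-1}$ and a nonnegative polynomial $f$ of degree at most $\tau$. In both parts, I take a pair $x_1,x_2\in C$ realizing $\langle x_1,x_2\rangle=\ell(C)$ and set $y:=(x_1-x_2)/\|x_1-x_2\|$, so that $\langle x_1,y\rangle=\sqrt{(1-\ell(C))/2}=-\langle x_2,y\rangle$. This antipodal symmetry makes $x_1$ and $x_2$ contribute equally to any even polynomial of their inner products with $y$, effectively doubling the ``signal'' from the extremal pair.

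For (a), with $\tau=2$, I will use $f(t)=t^2$. Its Gegenbauer expansion $t^2=\tfrac{1}{n}+\tfrac{n-1}{n}P_2^{(n)}(t)$ yields $f_0=1/n$, so \eqref{defin_f.3} gives $\sum_{x\in C}\langle x,y\rangle^2=N/n$. The $x_1,x_2$ terms alone contribute $\tfrac{1-\ell(C)}{2}+\tfrac{1-\ell(C)}{2}=1-\ell(C)$; since every other term is nonnegative, $1-\ell(C)\le N/n$, which is precisely \eqref{lbnd2}.

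For (b), with $\tau=4$, I will use the nonnegative degree-$4$ polynomial $f(t)=(t^2-a)^2$, leaving the parameter $a$ free. Since $f\ge 0$ on $[-1,1]$, every term in \eqref{defin_f.3} is nonnegative, and keeping only the $x_1,x_2$ contributions produces the clean one-sided bound
\[
2\bigl(\tfrac{1-\ell(C)}{2}-a\bigr)^2 \;\le\; Nf_0,
\]
where a short computation using the Gegenbauer expansions of $t^2$ and $t^4$ gives $f_0=3/[n(n+2)]-2a/n+a^2$. Taking square roots, $\tfrac{1-\ell(C)}{2}\le a+\sqrt{Nf_0/2}$, and it remains only to minimize the right-hand side over $a$.

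The final (and only slightly tricky) step is this one-variable optimization. Differentiating in $a$ and clearing denominators reduces the first-order condition to the quadratic equation $(N-2)(a-1/n)^2=4(n-1)/[n^2(n+2)]$; choosing the root with $a<1/n$ and substituting back collapses the minimum of $a+\sqrt{Nf_0/2}$ to $\tfrac{1}{n}\bigl(1+\sqrt{(n-1)(N-2)/(n+2)}\bigr)$, which yields \eqref{lbnd4}. The main obstacle is simply carrying this optimization through cleanly and recognizing the resulting expression; the design identity and the symmetric choice of $y$ do all the structural work.
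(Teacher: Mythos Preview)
Your proof is correct and follows essentially the same route as the paper's: the design identity \eqref{defin_f.3} applied with $f(t)=t^2$ (for (a)) and an even nonnegative quartic (for (b)) at the test point $y=(x_1-x_2)/\|x_1-x_2\|$, which is precisely the geodesic midpoint of $-x_1$ and $x_2$ that the paper (via \cite{BBKS}) prescribes. Your write-up in fact supplies the explicit optimization that the paper only cites, and the computations check out.
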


\begin{proof}
The bound in (a) can be proved by utilizing the polynomial $f(t)=t^2$ in (\ref{defin_f.3})
with $y$ being the midpoint of the geodesic arc between points $-x$ and $z \in C$, where $x$, $z$ is a
pair of  points of $C$ with smallest inner product (see Theorem 3.3 from \cite{BBKS}).
The bound in (b) is proved as in the example after Theorem 3.3 from \cite{BBKS}.
\end{proof}

\medskip

We remark that different bounds on $u(n,N,\tau)$ and $\ell(n,N,\tau)$ can be obtained by a technique
from \cite[Sections 2 and 3]{BBD}. For $\tau \geq 4$ such bounds are better in higher
dimensions than those from Lemmas \ref{Lem2.1} and  \ref{Lem2.2}.   More generally, when $\tau=2k$  we establish  in Lemma~\ref{Lem2.3}
lower bounds on $\ell(n,N,\tau)$ that will be used in Section 4 to establish Theorem~\ref{Thm4.1}.

\begin{lemma}[{\cite[Lemma 4.1]{BBD}}] \label{Lem2.3}
 Let $N \in (D(n,2k),D(n,2k+1))$ and $$f(t)=(t-\beta_1)^2\cdots(t-\beta_k)^2,$$ where $\beta_0,\ldots, \beta_k$ are as in \eqref{betadef}. 
 Let $\xi$ and $\eta$ denote the smallest and largest roots, respectively, of $f(t)=\gamma_0 Nf(-1)$. 
Then $\xi\le \ell(n,N,2k) \le u(n,N,2k) \leq \eta$.
\end{lemma}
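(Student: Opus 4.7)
My plan is to apply the Levenshtein quadrature formula~\eqref{defin_f0.2} to the nonnegative polynomial $f(t)=\prod_{i=1}^{k}(t-\beta_i)^2$ of degree exactly $2k$, and then invoke the spherical-design identity~\eqref{defin_f.3} at each point of the design $C$. Since $f$ vanishes at the quadrature nodes $\beta_1,\dots,\beta_k$, the formula~\eqref{defin_f0.2} collapses to
\[
N f_0 \;=\; f(1) + \gamma_0 N f(-1).
\]

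Next, fixing any $y\in C$ and applying~\eqref{defin_f.3}, I would split off the diagonal term $x=y$ (contributing $f(1)$) to obtain
\[
\sum_{x\in C,\,x\ne y} f(\langle x,y\rangle) \;=\; N f_0 - f(1) \;=\; \gamma_0 N f(-1).
\]
Writing $c:=\gamma_0 Nf(-1)$, note that $\beta_0=-1<\beta_i$ for $i\ge 1$, so $f(-1)>0$ and hence $c>0$.

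To locate $\xi$ and $\eta$ precisely, I would analyze the auxiliary polynomial $g(t):=f(t)-c$. It has degree $2k$ with positive leading coefficient, so $g(t)\to +\infty$ as $t\to\pm\infty$; on the other hand $g(\beta_i)=-c<0$ for each $i=1,\dots,k$. The intermediate value theorem then supplies a real root $\xi<\beta_1$ and a real root $\eta>\beta_k$, which must be the smallest and largest real roots of $g$, respectively. Because $g$ cannot change sign outside these extreme roots, $f(t)>c$ whenever $t<\xi$ or $t>\eta$.

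The finish is a one-line contradiction: if some inner product $\langle x,y\rangle$ with $x\ne y$ lay outside $[\xi,\eta]$, then $f(\langle x,y\rangle)>c$, and since every remaining summand is nonnegative (as $f\ge 0$), the displayed identity would be violated. Hence every inner product of $C$ lies in $[\xi,\eta]$, giving $\xi\le \ell(C)$ and $u(C)\le \eta$; taking the infimum and supremum over $2k$-designs of cardinality $N$ yields the claimed bounds. The proof is a direct application of the quadrature/design machinery already assembled; the only subtlety worth flagging is confirming the sign pattern of $g$ at $\pm\infty$ and at the $\beta_i$'s, which guarantees existence of the roots $\xi$ and $\eta$ and the strict inequality $f(t)>c$ on the complement of $[\xi,\eta]$.
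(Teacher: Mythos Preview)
Your proof is correct and follows essentially the same route as the paper's: apply the quadrature~\eqref{defin_f0.2} to collapse $Nf_0$ to $f(1)+\gamma_0 N f(-1)$, invoke the design identity~\eqref{defin_f.3} at a point $y\in C$, and use nonnegativity of $f$ to bound a single term $f(\langle x,y\rangle)\le \gamma_0 N f(-1)$. The paper then argues directly from the monotonicity of $f$ on $(-\infty,\beta_1]$ and $[\beta_k,\infty)$, whereas you analyze the sign pattern of $g(t)=f(t)-c$; the two formulations are equivalent.
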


\begin{proof}
For the convenience of the reader, we provide a proof of Lemma~\ref{Lem2.3}.  Let $C$ be a spherical design of strength $\tau=2k$ on ${\mathbb S}^{n-1}$ with $|C|=N$.  
Let $x$ and $y$ be distinct points in $C$.     Since  $f(t)\ge 0$ for all $t$  and $f$ vanishes at $\beta_1,\ldots, \beta_k$,  it follows from   \eqref{defin_f.3}    and \eqref{defin_f0.2}  that
\begin{equation*}
f(1)+f(\langle x,y\rangle)  \le \sum_{z\in C}f(\langle x,z\rangle)=Nf_0 = f(1)+N\gamma_0f(-1),
\end{equation*}
and so $f(\langle x,y\rangle)\le N\gamma_0 f(-1)$. 
Since $f(t)$ is strictly decreasing for $t<\beta_1$ and strictly increasing for $t>\beta_k$, we must have $\xi\le \langle x,y\rangle\le \eta$.  
\end{proof}

Note that   will produce a non-trivial bound  $\xi>-1$ if and only if $\gamma_0N<1$  and the next assertion,   implicit in \cite[Section 4]{BDL} (see
also Remark 5.58 in \cite{Lev}),  shows that this is indeed true for $N \in (D(n,2k),D(n,2k+1))$.    

\medskip

\begin{lemma}
\label{Lem2.4}
If $N \in (D(n,2k),D(n,2k+1)),$ then $\gamma_0 N \in (0,1)$.  Hence, $\ell(n,N,2k)>-1$.  
\end{lemma}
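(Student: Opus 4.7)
The plan is to reduce Lemma~\ref{Lem2.4} to the single inequality $\gamma_0 N \in (0,1)$, since this, together with Lemma~\ref{Lem2.3}, immediately yields $\ell(n,N,2k) > -1$: with $f(t) := \prod_{i=1}^{k}(t-\beta_i)^2$ we have $\gamma_0 N f(-1) < f(-1)$, and since $f$ is strictly decreasing on $(-\infty,\beta_1)$ with $f(-1) > 0$, the smallest root $\xi$ of $f(t) = \gamma_0 N f(-1)$ must lie strictly above $-1$, so that Lemma~\ref{Lem2.3} delivers $\ell(n,N,2k) \ge \xi > -1$.

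Applying the quadrature \eqref{defin_f0.2} to the degree-$2k$ polynomial $p(t) := \prod_{i=1}^{k}(t-\beta_i)^2$, which vanishes at $\beta_1,\ldots,\beta_k$, yields
\[
p_0 = \frac{p(1)}{N} + \gamma_0\,p(-1),\qquad \text{whence}\qquad \gamma_0 N = \frac{Np_0 - p(1)}{p(-1)}.
\]
Positivity of $\gamma_0$ in \eqref{defin_f0.2} gives $\gamma_0 N > 0$ immediately. For the upper bound, the plan is to treat $\gamma_0 N$ as a continuous function of $\beta_k$ on $\mathcal{I}_{2k} = [t_k^{1,0},t_k^{1,1}]$ and pin down its boundary values. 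At $\beta_k = t_k^{1,0}$ (equivalently $N = D(n,2k)$), the equality $L_{2k-1}(n,t_k^{1,0}) = L_{2k}(n,t_k^{1,0}) = D(n,2k)$ from \eqref{L-DGS2} forces \eqref{defin_f0.2} to coincide on degree $\le 2k-1$ with the $(2k-1)$-quadrature \eqref{defin_f0.1}, which has no node at $-1$; uniqueness of the quadrature then forces $\gamma_0 = 0$, hence $\gamma_0 N = 0$. At $\beta_k = t_k^{1,1}$ (equivalently $N = D(n,2k+1)$), the defining Jacobi orthogonality of $t_k^{1,1}$ upgrades \eqref{defin_f0.2} to exactness on all polynomials of degree $\le 2k+1$, producing the Gauss--Lobatto quadrature for the symmetric Gegenbauer weight $(1-t^2)^{(n-3)/2}$ on $[-1,1]$; by the $t \mapsto -t$ symmetry of that weight, the weights at $\pm 1$ coincide, forcing $\gamma_0 = 1/N$ and thus $\gamma_0 N = 1$.

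The main obstacle is passing from these endpoint values to the strict interior bound $\gamma_0 N < 1$ throughout $(t_k^{1,0}, t_k^{1,1})$. The plan is to establish strict monotonicity of the map $\beta_k \mapsto \gamma_0\,N$ on $\mathcal{I}_{2k}$. The key relation to exploit is
\[
\sum_{i=1}^{k} \gamma_i\,\beta_i \;=\; \gamma_0 - \frac{1}{N},
\]
obtained by applying \eqref{defin_f0.2} to $f(t) = t$, which reduces the task to showing $\sum_{i=1}^{k} \gamma_i \beta_i < 0$ in the interior of $\mathcal{I}_{2k}$. This is handled by implicit differentiation of the moment equations that define the $2k$-quadrature with respect to the parameter $\beta_k$, using strict positivity of the interior weights $\gamma_i$ together with the strict interlacing of the Jacobi-type nodes $\beta_1,\dots,\beta_{k-1}$ as functions of $\beta_k$---precisely the line of reasoning made explicit in \cite[Section~4]{BDL} and Remark~5.58 of \cite{Lev}. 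Once strict monotonicity is secured, the open-interval conclusion $\gamma_0 N \in (0,1)$ follows, and the previous paragraph finishes the proof via Lemma~\ref{Lem2.3}.
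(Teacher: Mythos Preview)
Your proposal has a genuine gap at the crucial step. The identity you obtain from $f(t)=t$,
\[
\sum_{i=1}^{k}\gamma_i\beta_i=\gamma_0-\frac{1}{N},
\]
is correct, and it reduces $\gamma_0 N<1$ \emph{directly} to the inequality $\sum_{i=1}^{k}\gamma_i\beta_i<0$. This is not a step toward monotonicity---it \emph{is} the target---so the monotonicity framing is a detour. More importantly, you do not prove $\sum_{i=1}^{k}\gamma_i\beta_i<0$. The nodes $\beta_i$ straddle zero (in particular $\beta_k=s>0$), so the sign is not obvious, and ``implicit differentiation of the moment equations'' with a pointer to \cite{BDL} and \cite{Lev} is not an argument. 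That is exactly the content of the lemma, and it cannot be outsourced.

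Your endpoint computation at $N=D(n,2k)$ is also not justified as written: the rule \eqref{defin_f0.2} has $k+2$ nodes while \eqref{defin_f0.1} has $k+1$, so there is no uniqueness statement on degree $\le 2k-1$ that forces $\gamma_0=0$. (The $s=t_k^{1,1}$ endpoint via Gauss--Lobatto symmetry is sound in spirit, but even granting both endpoints you still need the interior inequality, which is missing.)

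For comparison, the paper's proof avoids these issues by quoting Levenshtein's explicit formulas for $\gamma_0$ and for $N=L_{2k}(n,s)$, combining them algebraically to obtain
\[
\gamma_0 N=\frac{T-A(s)}{T-1/A(s)}=\frac{|T|+|A(s)|}{|T|+1/|A(s)|},
\]
and then reading off the range $(0,1)$ from the sign and monotonicity of the Jacobi-polynomial ratio $A(s)=T\cdot P_k^{1,0}(s)/P_k^{0,1}(s)$ on $(t_k^{1,0},t_k^{1,1})$, all of which are established in \cite{Lev}. That route is short and self-contained given those references; your route would become a genuine alternative only after you supply an actual proof of $\sum_{i=1}^{k}\gamma_i\beta_i<0$.
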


\begin{proof}
We have the formulas (Equation (5.113) in \cite{Lev})
\[ \gamma_0=\frac{T_k(s,1)}{T_k(-1,-1)T_k(s,1)-T_k(-1,1)T_k(s,-1)} \]
and (the equation in the last line of page 488 in \cite{Lev})
\[ N=L_{2k}(n,s)=\frac{T_k(1,1)T_k(s,-1)-T_k(1,-1)T_k(s,1)}{T_k(s,-1)}. \]
A little algebra then shows that
\[ \gamma_0 N= \frac{T-A(s)}{T-1/A(s)}, \]
where $A(s)=T_k(s,1)/T_k(s,-1)$ as in \cite{BDL} and $T=T_k(1,1)/T_k(1,-1)$.
Moreover, we have
\[ A(s)=T \cdot \frac{P_k^{1,0}(s)}{P_k^{0,1}(s)} \]
from \cite[Lemma 5.24]{Lev}, where $P_k^{1,0}(s)>0$ and $P_k^{0,1}(s)<0$ for every
$s \in \left(t_k^{1,0},t_k^{1,1}\right)$ (see Lemmas 5.29 and 5.30 in \cite{Lev}).
Therefore the signs of $A(s)$ and $T$ are opposite. We conclude that
\[  \gamma_0 N= \frac{|T|+|A(s)|}{|T|+1/|A(s)|}. \]
The ratio $\frac{P_k^{1,0}(s)}{P_k^{0,1}(s)}$ is
decreasing in $s$ in the interval $\left(t_k^{1,0},t_k^{1,1}\right)$ (see \cite[Lemma 5.31]{Lev}), i.e.
$|A(s)|$ is increasing in $s \in \left(t_k^{1,0},t_k^{1,1}\right)$.
Since $\gamma_0 N=0$ and 1 for $s=t_k^{1,0}$ and $t_k^{1,1}$, respectively,
we obtain that $\gamma_0 N$ increases from 0 to 1 when $s$ increases from $s=t_k^{1,0}$ to $t_k^{1,1}$.
\end{proof}

\medskip

\section{General lower and upper bounds}

The general framework of the linear programming bounds for the quantities $\mathcal{L}(n,N,\tau;h)$ and $\mathcal{U}(n,N,\tau;h)$ is
given by the next two theorems.  Theorem~\ref{Thm3.1} is an adaptation of   known results (cf. \cite{Y,CK}) to spherical designs, we are not aware
of any prior use of linear programming techniques for obtaining upper bounds on
the potential energy as in Theorem~\ref{Thm3.6}.

\medskip

\begin{theorem} \label{Thm3.1}
 Let $n$, $N$, $\tau$ be positive integers with $N \geq D(n,\tau)$ and let $h:[-1,1]\to[0,+\infty]$.     Suppose
 $I$ is a subset of $[-1,1)$ and $f(t) = \sum_{i=0}^{\deg(f)} f_i P_i^{(n)}(t)$ is a real
polynomial such that
\begin{enumerate}
\item[{\rm (A1)}] $f(t) \leq h(t)$ for $t\in I$, and

\item[{\rm (A2)}] the Gegenbauer coefficients  satisfy $f_i \geq 0$ for $i \geq \tau+1$.
\end{enumerate}
If $C \subset \mathbb{S}^{n-1}$ is a spherical $\tau$-design of $|C|=N$ points such that $\langle x,y\rangle \in I$ for distinct points
$x,y\in C$, then
\begin{equation}\label{mainEbnd}
E(n,C;h) \geq N(f_0N-f(1)).
\end{equation}
In particular, if $I=[\ell(n,N,\tau),u(n,N,\tau)]$, then
\begin{equation}\label{mainLbnd}
\mathcal{L}(n,N,\tau;h) \geq N(f_0N-f(1)).
\end{equation}
\end{theorem}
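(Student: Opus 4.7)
The plan is to apply the fundamental identity (\ref{main}) to the polynomial $f$ and exploit the design property together with assumptions (A1) and (A2).

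First I would recall that for a spherical $\tau$-design $C$ with $|C|=N$, the defining property implies $\sum_{x\in C} Y_{ij}(x) = 0$ for every $1 \le i \le \tau$ and every $j$; this follows because each $Y_{ij}$ is a homogeneous harmonic polynomial of positive degree $i\le\tau$, so it is orthogonal to the constants and integrates to zero over $\mathbb{S}^{n-1}$, and the design property equates that integral with the normalized sum over $C$. Substituting $f$ into (\ref{main}) therefore collapses the right-hand sum to indices $i\ge \tau+1$:
\begin{equation*}
Nf(1)+\sum_{\substack{x,y\in C\\ x\ne y}} f(\langle x,y\rangle) = N^2 f_0 + \sum_{i=\tau+1}^{\deg(f)} \frac{f_i}{r_i}\sum_{j=1}^{r_i}\Bigl(\sum_{x\in C} Y_{ij}(x)\Bigr)^2.
\end{equation*}

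Next, by assumption (A2), every coefficient $f_i$ with $i\ge \tau+1$ is non-negative, and the inner sums of squares are manifestly non-negative. Hence the entire right-most sum is non-negative and can be discarded to obtain
\begin{equation*}
\sum_{\substack{x,y\in C\\ x\ne y}} f(\langle x,y\rangle) \;\ge\; N^2 f_0 - Nf(1) \;=\; N(Nf_0-f(1)).
\end{equation*}

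Finally, since the hypothesis places every pairwise inner product $\langle x,y\rangle$ (for $x\ne y$ in $C$) inside the set $I$, assumption (A1) yields $f(\langle x,y\rangle)\le h(\langle x,y\rangle)$ for each such pair. Summing gives $E(n,C;h)\ge \sum_{x\ne y}f(\langle x,y\rangle) \ge N(Nf_0-f(1))$, which is \eqref{mainEbnd}. For the specialization \eqref{mainLbnd}, the choice $I=[\ell(n,N,\tau),u(n,N,\tau)]$ contains the inner products of every spherical $\tau$-design of cardinality $N$ by the very definitions \eqref{unNtau} and \eqref{lnNtau}, so the bound \eqref{mainEbnd} applies uniformly to every competitor in \eqref{LE} and passes to the infimum.

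There is no serious obstacle here: the entire argument is a direct reading of (\ref{main}) under (A1) and (A2), and the only content requiring a moment of care is the justification that the design condition eliminates the contributions from degrees $1,\dots,\tau$, which is standard. The proof is essentially a template one needs to set up cleanly so that the more subtle choices of $f$ made in subsequent sections (Levenshtein-type polynomials, interval restrictions, higher-degree perturbations) can be plugged in without further comment.
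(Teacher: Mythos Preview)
Your proof is correct and follows essentially the same approach as the paper's: apply the identity (\ref{main}), use (A1) to bound $E(n,C;h)$ below by the $f$-sum, and then drop the non-negative tail on the right-hand side. You are simply more explicit than the paper about why the terms with $1\le i\le \tau$ vanish (via the design property), whereas the paper folds this into the phrase ``using the conditions of the theorem.''
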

\medskip

\begin{proof} Using (\ref{main}) and the conditions of the theorem we consecutively have
\begin{eqnarray*}
Nf(1)+E(n,C;h) &=& Nf(1)+\sum_{x,y\in C, x \neq y} h(\langle x,y \rangle) \\
               &\geq& |C|f(1)+ \sum_{x,y\in C, x \neq y} f(\langle x,y\rangle) \\
               &=& |C|^2f_0 + \sum_{i=1}^{\deg(f)} \frac{f_i}{r_i} \sum_{j=1}^{r_i} \left ( \sum_{x\in C} Y_{ij}(x) \right )^2 \\
               &\geq& N^2f_0,
\end{eqnarray*}
which implies that $E(n,C;h) \geq N(f_0N-f(1))$.
If $I=[\ell(n,N,\tau),u(n,N,\tau)]$, the assertion \eqref{mainLbnd} follows immediately from the definitions  \eqref{unNtau} and  \eqref{lnNtau}.
\end{proof}
  \medskip

Our choice of polynomials for Theorem~\ref{Thm3.1} follows from ideas of Levenshtein \cite{Lev3,Lev} and
the connections (\ref{L-DGS1}) and (\ref{L-DGS2}).
We start with fixed dimension $n$, cardinality $N$ and strength $\tau$ under the assumption that
$N \in \left(D(n,\tau),D(n,\tau+1)\right]$. Now the equation $N=L_{\tau}(n,s)$ determines all necessary
parameters as explained in subsections 2.3 and 2.4.

\begin{defn} We denote by $A_{n,\tau,I;h}$ the set of
polynomials satisfying conditions (A1) and (A2) of Theorem~\ref{Thm3.1}.  For convenience we shall write
$A_{n,\tau;h}:=A_{n,\tau,[-1,1];h}$.
\end{defn}

Next we {\em need} Hermite interpolation as follows. If $h\in C^1([-1,1])$, we define
the Hermite interpolant $F(t)$ as follows:

(i) for odd $\tau=2k-1$ the polynomial $F(t)$ of degree at most $2k-1$ by
\[ F(\alpha_i)=h(\alpha_i), \ F^\prime(\alpha_i)=h^\prime(\alpha_i), \ i=0,1,\ldots,k-1; \]

(ii) for even $\tau=2k$ the polynomial $F(t)$ of degree at most $2k$ by
\[ F(\beta_0)=h(\beta_0), \ F(\beta_i)=h(\beta_i), \ F^\prime(\beta_i)=h^\prime(\beta_i), \ i=1,\ldots,k. \]

These conditions define, as in \cite{CK} (see also \cite{And,KY,Y}), a
Hermite interpolation problem  that requires the graph of $F(t)$ to intersect and be tangent to the
graph of the potential function $h(t)$ at all points $\alpha_i$ and all $\beta_i$ except for $\beta_0=-1$ where only intersection is required.

\medskip

%
%
%
%
%
\begin{lemma}\label{Lem4.1}
  Suppose $h$ is absolutely monotone on $[-1,1]$ and that $F$ satisfies {\rm (i)} or {\rm (ii)}.
  Then  $F(t)\le h(t)$ for all $t\in[-1,1]$.
\end{lemma}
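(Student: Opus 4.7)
The plan is to invoke the standard Hermite remainder formula and exploit the sign of each factor appearing in it. Since $h$ is absolutely monotone on $[-1,1]$, all its derivatives exist and are nonnegative there, so for any integer $m\ge 0$ and any $t\in[-1,1]$ there exists $\xi=\xi(t)\in(-1,1)$ with
\[
h(t)-F(t)=\frac{h^{(m)}(\xi)}{m!}\,W(t),
\]
where $m$ is the total number of interpolation conditions and $W(t)$ is the associated node polynomial (the product of $(t-x_j)$ over the interpolation nodes $x_j$ repeated according to multiplicity).

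In case {\rm (i)}, where $\tau=2k-1$ and $F$ interpolates $h$ and $h^\prime$ at each $\alpha_0,\ldots,\alpha_{k-1}$, the total number of conditions is $m=2k$ and the node polynomial is
\[
W(t)=\prod_{i=0}^{k-1}(t-\alpha_i)^2\ge 0\qquad(t\in[-1,1]).
\]
Absolute monotonicity gives $h^{(2k)}(\xi)\ge 0$, hence $h(t)-F(t)\ge 0$ on $[-1,1]$.

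In case {\rm (ii)}, where $\tau=2k$ and $F$ interpolates $h$ at $\beta_0=-1$ and interpolates both $h$ and $h^\prime$ at each of $\beta_1,\ldots,\beta_k$, the total number of conditions is $m=2k+1$ and
\[
W(t)=(t-\beta_0)\prod_{i=1}^{k}(t-\beta_i)^2=(t+1)\prod_{i=1}^{k}(t-\beta_i)^2\ge 0\qquad(t\in[-1,1]).
\]
Together with $h^{(2k+1)}(\xi)\ge 0$ this yields $F(t)\le h(t)$ on $[-1,1]$ as well.

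The only subtlety worth checking is the applicability of the Hermite error formula: it requires $h$ to possess a continuous derivative of order $m$ on an interval containing all nodes and the evaluation point $t$. Since $h$ is absolutely monotone on the closed interval $[-1,1]$, every derivative exists and is nonnegative there (and in particular continuous on $(-1,1)$), so the classical proof via Rolle's theorem applied to the auxiliary function $t\mapsto h(t)-F(t)-c\,W(t)$ (with $c$ chosen to make this vanish at an extra point) goes through without change. No other obstacle is anticipated; the argument is essentially the same device used by Yudin and by Cohn--Kumar in the analogous setting for spherical codes, and the sign structure of $W(t)$ is tailored precisely so that the Hermite interpolant provides a lower-bounding polynomial for any absolutely monotone $h$.
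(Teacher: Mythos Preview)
Your proof is correct and follows essentially the same approach as the paper: both invoke the standard Hermite interpolation error formula, identify the node polynomial $W(t)$ as a nonnegative function on $[-1,1]$ in each of the two cases, and conclude from $h^{(\tau+1)}\ge 0$. Your added remarks about the regularity needed to apply the remainder formula are a welcome bit of extra care that the paper omits.
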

\begin{proof}
The proof follows from the well-known error formula for Hermite interpolation (see \cite{D}), namely
\begin{equation}
\label{HermiteError}
h(t)-F(t) = \begin{cases} \frac{h^{(\tau+1)}(\xi )}{(\tau+1)!}(t-\alpha_0)^2\cdots (t-\alpha_{k-1})^2, & \tau=2k-1,\\
& \\
  \frac{h^{(\tau+1)}(\xi )}{(\tau+1)!}(t-\beta_0)(t-\beta_1)^2\cdots (t-\beta_{k})^2, & \tau=2k,
\end{cases}
 \end{equation}
for some $\xi=\xi(t)\in(-1,1)$ and the fact that $h^{(\tau+1)}(t)\ge 0$ for $t\in [-1,1]$.
\end{proof}

\medskip

In \cite[Theorem~3.1]{BDHSS1}, the following result was established by the authors, 
which provides a lower bound on the $h$-energy of {\it general} codes.

\begin{theorem} \label{Thm3.4} Let $n,N$ and $\tau$ be positive integers with 
$N \in (D(n,\tau),D(n,\tau+1)]$  and  suppose   $h$ is absolutely monotone on $[-1,1]$.   
Then, for any positive integer $\tau$, we have
\begin{equation}
\label{bound_odd}
\mathcal{L}(n,N,\tau;h)\ge \inf_{C} E(n,C;h) \geq \begin{cases}N^2\sum_{i=0}^{k-1} \rho_i h(\alpha_i),& \tau=2k-1, \\
& \\
 N^2\sum_{i=0}^{k} \gamma_i h(\beta_i),& \tau=2k,
\end{cases}
 \end{equation}
 where the infimum is over spherical codes $C\subset \Sp^{n-1}$   with $|C|=N$.
  \end{theorem}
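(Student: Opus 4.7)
The plan is to apply the Delsarte--Yudin linear programming method directly (rather than via Theorem~\ref{Thm3.1}, since here $C$ is a general code, not a design). Choose as test polynomial $f=F$, the Hermite interpolant defined in (i) or (ii). Then substitute $f=F$ into the identity \eqref{main}: the terms $\sum_{x,y \in C,\, x\neq y} h(\langle x,y\rangle) = E(n,C;h)$ will be bounded below by $\sum_{x,y\in C,\, x\ne y} F(\langle x,y\rangle)$ provided $F(t)\le h(t)$ on $[-1,1]$, and the squared-sums on the right-hand side of \eqref{main} will be nonnegative provided the Gegenbauer coefficients satisfy $F_i\ge 0$ for all $i\ge 1$. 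Granted both conditions, we obtain
\begin{equation*}
N F(1) + E(n,C;h) \;\ge\; N F(1) + \sum_{x\ne y} F(\langle x,y\rangle) \;\ge\; N^2 F_0,
\end{equation*}
whence $E(n,C;h)\ge N(N F_0 - F(1))$.

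To finish, I would evaluate $F_0$ by the quadrature formulas \eqref{defin_f0.1} and \eqref{defin_f0.2}. Since $\deg F\le \tau$, these quadratures are exact on $F$, and the interpolation conditions (i)--(ii) force $F(\alpha_i)=h(\alpha_i)$ and $F(\beta_j)=h(\beta_j)$. Hence for $\tau=2k-1$,
\begin{equation*}
F_0 \;=\; \frac{F(1)}{N}+\sum_{i=0}^{k-1}\rho_i F(\alpha_i) \;=\; \frac{F(1)}{N}+\sum_{i=0}^{k-1}\rho_i h(\alpha_i),
\end{equation*}
and $N(NF_0-F(1))=N^2\sum_{i=0}^{k-1}\rho_i h(\alpha_i)$. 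The even case $\tau=2k$ is identical using the $\beta_i,\gamma_i$ quadrature, giving $N^2\sum_{i=0}^{k}\gamma_i h(\beta_i)$.

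The condition $F(t)\le h(t)$ on $[-1,1]$ is precisely Lemma~\ref{Lem4.1}, which follows from the Hermite remainder formula \eqref{HermiteError} together with absolute monotonicity of $h$; note that in the even case the simple factor $(t-\beta_0)=(t+1)$ is indeed nonnegative on $[-1,1]$, so the sign of the product matches the sign of $h^{(\tau+1)}(\xi)\ge 0$.

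The main obstacle is the positivity of the higher Gegenbauer coefficients, $F_i\ge 0$ for $i\ge 1$, which is the nontrivial ingredient of the Delsarte--Yudin method and the heart of the result quoted from \cite{BDHSS1}. I would derive this by writing $F$ as a specific nonnegative combination of the Levenshtein-type kernels built from the nodes $\{\alpha_i\}$ or $\{\beta_i\}$: concretely, one expresses $F$ in a Newton-type basis of squared factors $\prod(t-\alpha_j)^2$ (resp.\ $(t+1)\prod(t-\beta_j)^2$) with coefficients expressible as divided differences of $h$, which are nonnegative because $h$ is absolutely monotone. The positivity of the Gegenbauer coefficients of each basis element is the property that underlies Levenshtein's bound \eqref{L_bnd}, and nonnegative combinations preserve this positivity. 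Verifying this representation carefully (and confirming the signs of the divided-difference coefficients) is the technical crux; the remaining quadrature computation is then routine.
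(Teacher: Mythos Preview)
Your approach is correct and is essentially the argument given in \cite{BDHSS1}, which the present paper cites rather than reproving. The paper itself contains no proof of Theorem~\ref{Thm3.4}; it only uses the Hermite interpolant $F$ again in the proof of Theorem~\ref{Thm3.5}, where the context is spherical \emph{designs}, so that condition (A2) of Theorem~\ref{Thm3.1} is vacuous for $\deg F\le\tau$ and no positivity of Gegenbauer coefficients is needed. You correctly recognized that for general codes one needs the stronger fact $F_i\ge 0$ for all $i\ge 1$, and that this is the substantive part of the argument.

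Your sketch of that positivity step is also on the right track: writing the Hermite interpolant in the Newton form with nodes repeated according to their multiplicity, the coefficients are divided differences of $h$, hence nonnegative by absolute monotonicity, and the basis polynomials are exactly the (partial) Levenshtein polynomials, whose Gegenbauer expansions are nonnegative by Levenshtein's theory (this is the content behind the bounds \eqref{L_bnd}). One small point worth making precise when you carry this out: in the even case $\tau=2k$ the node $\beta_0=-1$ is simple while $\beta_1,\dots,\beta_k$ are double, so the Newton basis consists of the polynomials $1,\,(t+1),\,(t+1)(t-\beta_1),\,(t+1)(t-\beta_1)^2,\dots$, and one must check that each of these (not just the perfect-square products) has a nonnegative Gegenbauer expansion; this is indeed part of Levenshtein's framework for the adjacent $(1,0)$ and $(1,1)$ Jacobi systems, but it is the place where a careless argument could slip.
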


\medskip

Utilizing Theorems \ref{Thm3.1} and \ref{Thm3.4} we provide a sufficient condition 
for the optimality of the bounds in \eqref{bound_odd} for a class of spherical designs.
\begin{theorem} \label{Thm3.5}
Suppose  $n$, $\tau$, and $I$ are as in Theorem~\ref{Thm3.1}, $N\in (D(n,\tau),D(n,\tau+1)]$, $h$  is absolutely monotone on $[-1,1]$ and that
$ \alpha_i\in I$ for $i=0,\ldots , k-1$ in the case that $\tau =2k-1$ and that $ \beta_i\in I$ for $i=0,\ldots , k$ in the case that $\tau =2k$.
If $C\subset \Sp^{n-1}$ is a spherical $\tau$-design with $|C|=N$ and inner products $\langle x, y \rangle\in I$ for $x\neq y\in C$, then the linear programming lower bounds in \eqref{bound_odd} cannot be improved by utilizing polynomials  of degree at most  $\tau$
satisfying {\rm (A1)}; i.e., for any such polynomial  $f$  we have
\begin{equation}\label{Ibnd}
N(f_0N-f(1))\le N(F_0N-F(1))=\begin{cases}N^2\sum_{i=0}^{k-1} \rho_i h(\alpha_i),& \tau=2k-1, \\
& \\
N^2\sum_{i=0}^{k} \gamma_i h(\beta_i),& \tau=2k,
\end{cases}
\end{equation}
where $F(t)$ is the Hermite interpolating polynomial from {\rm (i)} and {\rm (ii)}.
\end{theorem}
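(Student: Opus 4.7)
The plan is to exploit the quadrature formulas \eqref{defin_f0.1} and \eqref{defin_f0.2} to reduce the expression $N(f_0N-f(1))$ to a weighted sum of values of $f$ at the interpolation nodes, and then to use (A1) pointwise at those nodes. The degree hypothesis on $f$ is exactly what makes the quadrature applicable, while the hypothesis that the nodes lie in $I$ is what lets us invoke (A1) at each node.

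Concretely, assume first $\tau=2k-1$. Since $\deg f\le 2k-1$, the quadrature formula \eqref{defin_f0.1} gives
\begin{equation*}
f_0 = \frac{f(1)}{N} + \sum_{i=0}^{k-1} \rho_i\, f(\alpha_i),
\end{equation*}
and after multiplying by $N$ and rearranging,
\begin{equation*}
N(f_0 N - f(1)) = N^2 \sum_{i=0}^{k-1} \rho_i\, f(\alpha_i).
\end{equation*}
By the hypothesis $\alpha_i\in I$ and condition (A1), we have $f(\alpha_i)\le h(\alpha_i)$ for each $i$; since $\rho_i>0$ this yields
\begin{equation*}
N(f_0 N - f(1)) \le N^2 \sum_{i=0}^{k-1} \rho_i\, h(\alpha_i).
\end{equation*}
Finally, the Hermite interpolant $F$ from (i) satisfies $F(\alpha_i)=h(\alpha_i)$ and has degree at most $2k-1$, so the same quadrature applied to $F$ shows $N(F_0N-F(1))=N^2\sum_{i=0}^{k-1}\rho_i h(\alpha_i)$, giving \eqref{Ibnd}. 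The case $\tau=2k$ is identical in spirit, using \eqref{defin_f0.2} in place of \eqref{defin_f0.1}: since $\beta_0\in I$ and $\beta_i\in I$ for $i=1,\dots,k$, condition (A1) implies $f(\beta_i)\le h(\beta_i)$ at every node, and the Hermite conditions in (ii) (including the interpolation at $\beta_0$) ensure equality when $f$ is replaced by $F$.

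The rightmost equality in \eqref{Ibnd}, namely $N(F_0N-F(1))=N^2\sum_i \rho_i h(\alpha_i)$ (or the analogous $\gamma$-sum), is precisely the statement that $F$ realizes the lower bound of Theorem~\ref{Thm3.4}; Lemma~\ref{Lem4.1} already shows that $F\le h$ on all of $[-1,1]$, so $F$ is itself admissible for Theorem~\ref{Thm3.1}. There is no genuine obstacle: the only subtlety worth flagging is the role of the hypothesis that the $\alpha_i$ (resp.\ $\beta_i$) all lie in $I$, which is needed because (A1) is assumed only on $I$ rather than on all of $[-1,1]$; without this, the pointwise step $f(\alpha_i)\le h(\alpha_i)$ could fail. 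Thus the argument is essentially a direct appeal to positivity of the quadrature weights combined with the Hermite interpolation property.
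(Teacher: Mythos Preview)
Your proof is correct and follows essentially the same approach as the paper: apply the quadrature formula \eqref{defin_f0.1} (or \eqref{defin_f0.2}) to $f$ to write $N(f_0N-f(1))=N^2\sum_i \rho_i f(\alpha_i)$, use (A1) at the nodes $\alpha_i\in I$ together with positivity of the weights to bound this by $N^2\sum_i \rho_i h(\alpha_i)$, and then identify the latter with $N(F_0N-F(1))$ via the Hermite interpolation conditions. Your remark that Lemma~\ref{Lem4.1} shows $F$ is itself admissible also appears in the paper's argument.
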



\medskip

\begin{proof} We shall consider only the case $\tau=2k-1$ since the $\tau=2k$ case is analogous. 
Notice that {\rm (i)} and \eqref{defin_f0.1} allow us to rewrite \eqref{bound_odd}  as
\[ E(n,C;h) \geq N^2\sum_{i=0}^{k-1} \rho_i F(\alpha_i)=N(F_0 N-F(1)).\]
Lemma~\ref{Lem4.1} implies that $F(t) \leq h(t)$ for every $t \in [-1,1]$; in particular $F(t)$
satisfies the condition (A1) of Theorem~\ref{Thm3.1}.
The condition (A2) is trivially satisfied and therefore $F \in A_{n,2k-1,I;h}$.


Furthermore, for any polynomial $f(t) \in A_{n,2k-1,I;h}$ of degree at most $2k-1$,
we have from the quadrature formula (\ref{defin_f0.1}) for $f(t)$ and the fact that $\{\alpha_i\}\subset I$
\[ N(F_0N-F(1))=N^2\sum_{i=0}^{k-1} \rho_i h(\alpha_i) \geq N^2\sum_{i=0}^{k-1} \rho_i f(\alpha_i)=N(f_0N-f(1)),\]
which proves \eqref{Ibnd} and the theorem.
\end{proof}

\medskip

In Theorem~\ref{Thm4.1}  we show that  the  bound for $\mathcal{L}(n,N,\tau;h)$ in \eqref{bound_odd}
 can be improved over  the whole range
$D(n,\tau) < N < D(n,\tau+1)$ in the case of even $\tau$.
\medskip

As mentioned above, the specific properties of the spherical designs, namely the existence of nontrivial
upper bounds on $u(n,N,\tau)$, allows further application
of the linear programming techniques. We are able to derive upper bounds on $\mathcal{U}(n,N,\tau;h)$
thus setting a strip for the energies of the spherical designs under consideration.

\medskip

\begin{theorem}\label{Thm3.6}  Let $n$, $N$, $\tau$ be positive integers with $N \geq D(n,\tau)$ and let $h:[-1,1]\to[0,+\infty]$.     Suppose
 $I$ is a subset of $[-1,1)$ and $g(t) = \sum_{i=0}^{\deg(g)} g_i P_i^{(n)}(t)$ is a real polynomial such that

\begin{enumerate}
\item[{\rm (B1)}] $g(t) \geq h(t)$ for  $ t \in I$, and

\item[{\rm (B2)}] the Gegenbauer coefficients   satisfy $g_i \leq 0$ for $i \geq \tau+1$.
\end{enumerate}
If $C \subset \mathbb{S}^{n-1}$ is a spherical $\tau$-design of $|C|=N$ points such that $\langle x,y\rangle \in I$ for distinct points
$x,y\in C$, then
\begin{equation}\label{mainEUbnd}
E(n,C;h) \leq N(g_0N-g(1)).
\end{equation}
In particular, if $I=[\ell(n,N,\tau),u(n,N,\tau)]$, then
\begin{equation}\label{mainUbnd}
\mathcal{U}(n,N,\tau;h) \leq N(g_0N-g(1)).
\end{equation}
\end{theorem}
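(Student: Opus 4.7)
The plan is to mirror the proof of Theorem \ref{Thm3.1}, reversing inequalities throughout and exploiting the design property to kill the low-degree contributions on the right-hand side of the fundamental identity \eqref{main}. In that identity, applied to $g$ and the code $C$, the sum $\sum_{x\in C} Y_{ij}(x)$ vanishes for every $1\le i\le \tau$ and every $j$, because the $Y_{ij}$ are harmonic polynomials of total degree $i\le \tau$, hence have zero mean on $\Sp^{n-1}$, and $C$ is a $\tau$-design. Therefore in the expansion
\[
N g(1) + \sum_{x,y\in C,\,x\ne y} g(\langle x,y\rangle)
  = N^2 g_0 + \sum_{i=1}^{\deg(g)}\frac{g_i}{r_i}\sum_{j=1}^{r_i}\bigl(\sum_{x\in C}Y_{ij}(x)\bigr)^2,
\]
only the terms with $i\ge \tau+1$ survive on the right.

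The next step is to discard those surviving terms using condition (B2): each $g_i$ with $i\ge \tau+1$ is non-positive, each inner squared sum is non-negative, and $r_i>0$, so every surviving summand is $\le 0$. This yields
\[
N g(1) + \sum_{x,y\in C,\,x\ne y} g(\langle x,y\rangle) \le N^2 g_0,
\]
that is, $\sum_{x\ne y} g(\langle x,y\rangle)\le N(g_0 N - g(1))$.

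Finally I would apply condition (B1): since all inner products $\langle x,y\rangle$ of distinct points of $C$ lie in $I$, we have $h(\langle x,y\rangle)\le g(\langle x,y\rangle)$ for every such pair, and summing gives $E(n,C;h)\le N(g_0 N - g(1))$, which is \eqref{mainEUbnd}. The special case $I=[\ell(n,N,\tau),u(n,N,\tau)]$ is automatic: by the definitions \eqref{unNtau} and \eqref{lnNtau}, every $\tau$-design of $N$ points on $\Sp^{n-1}$ satisfies $\langle x,y\rangle\in I$, so taking the supremum over all such designs yields \eqref{mainUbnd}.

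There is really no hard step; the argument is symmetric to Theorem \ref{Thm3.1}. The only point worth flagging is sign-tracking, since the result depends on $g_i\le 0$ (rather than $\ge 0$) for $i\ge\tau+1$, and (B1) reversed to $g\ge h$ on $I$. These two sign reversals are precisely what converts a lower bound of the Yudin/Delsarte type into an upper bound, and they are only available because the design property frees us from needing the $i\le \tau$ coefficients to have a definite sign. In writing the proof I would present the four-line chain of (in)equalities $Ng(1)+E(n,C;h)\le Ng(1)+\sum g(\langle x,y\rangle) = N^2g_0 + (\text{non-positive sum}) \le N^2 g_0$ to make the parallel with Theorem \ref{Thm3.1} transparent.
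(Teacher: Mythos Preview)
Your proposal is correct and follows essentially the same approach as the paper: apply the identity \eqref{main} to $g$, use (B1) to pass from $E(n,C;h)$ to $\sum_{x\ne y} g(\langle x,y\rangle)$, and then bound the right-hand side above by $N^2 g_0$ using (B2) together with the vanishing of the $i\le\tau$ moments by the design property. The paper's written proof is exactly the four-line chain you describe at the end; your additional remark that only the $i\ge\tau+1$ terms survive (so that (B2) suffices) makes explicit what the paper leaves implicit.
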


\begin{proof} Let $C \subset \mathbb{S}^{n-1}$ be an arbitrary spherical
$\tau$-design of $|C|=N$ points. Using (\ref{main}) and the conditions of the theorem we consecutively have
\begin{eqnarray*}
Ng(1)+E(n,C;h) &=& Ng(1)+\sum_{x,y\in C, x \neq y} h(\langle x,y \rangle) \\
               &\leq& Ng(1)+\sum_{x,y\in C, x \neq y} g(\langle x,y\rangle) \\
               &=& N^2g_0 + \sum_{i=1}^{\deg(g)} \frac{g_i}{r_i} \sum_{j=1}^{r_i} \left( \sum_{x\in C} Y_{ij}(x) \right )^2 \\
               &\leq& N^2g_0,
\end{eqnarray*}
which implies that $E(n,C;h) \leq N(g_0N-g(1))$. Since the design $C$
was arbitrary, we conclude that $\mathcal{U}(n,N,\tau;h) \leq N(g_0N-g(1))$.
\end{proof}
  \medskip

We utilize Theorem \ref{Thm3.6} to determine an upper bound that in conjunction 
with the lower bound \eqref{Ibnd} in Theorem \ref{Thm3.5} defines a strip where 
the energy of designs lives.  We shall formulate the theorem for the odd case $\tau=2k-1$, 
but a similar assertion holds for the even case $\tau=2k$. 
\begin{theorem}\label{Thm3.7}  
Let $n$, $N$, $\tau=2k-1$ be positive integers with $N \in (D(n,\tau),D(n,\tau+1)]$ 
and let $h:[-1,1]\to[0,+\infty]$ be  absolutely monotone.  For  
 $\alpha_{k-1}<u<1$ (see \eqref{alphadef}) and every $j \in \{0,1\ldots,k-1\}$,
let $G(t)=G_{j,u}(t)$ to be the Hermite interpolant of $h$ of degree $2k-1$ that satisfies
\[ G(\alpha_i)=h(\alpha_i), \ G^\prime(\alpha_i)=h^\prime(\alpha_i), \ i \in \{0,1,\ldots,k-1\} \setminus \{j\},
G(-1)=h(-1), \ G(u)=h(u). \]

Then, for any spherical $\tau$-design $C$ with $|C|=N$ and $\displaystyle u(C)=\max_{x,y\in C, x\neq y}\langle x,y\rangle \leq u$,  
\begin{equation}\label{StripUbnd}
E(n,C;h) \leq N(G_0N-G(1))=N^2\sum_{i=0}^{k-1} \rho_i h(\alpha_i)+N^2\rho_{j}\left[G(\alpha_{j})-h(\alpha_{j})\right].
\end{equation}
In particular, taking $u:=u(n,N,\tau)$, we obtain 
\begin{equation}\label{StripEstimate}
\mathcal{U}(n,N,\tau;h) -  \mathcal{L}(n,N,\tau;h) \leq N^2 \min_{0\le j\le k-1} \rho_{j}[G_{j,u}(\alpha_{j})-h(\alpha_{j})].
\end{equation}
\end{theorem}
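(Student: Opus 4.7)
The plan is to apply Theorem~\ref{Thm3.6} with the polynomial $g=G_{j,u}$ and the interval $I=[-1,u]$. Hypothesis (B2) is vacuous here because $\deg G\le 2k-1=\tau$, so no Gegenbauer coefficient of index $\ge\tau+1$ even appears. The core task is to verify hypothesis (B1), namely that $G(t)\ge h(t)$ on $[-1,u]$, and this is where the carefully chosen interpolation nodes pay off.

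For (B1) I would invoke the standard Hermite interpolation error formula. The conditions impose $2(k-1)+2=2k$ constraints on the polynomial $G$ of degree at most $2k-1$, all of which are matched, so $h-G$ vanishes to the prescribed orders at the nodes $\alpha_i$ ($i\ne j$), at $-1$, and at $u$. Hence
\[
h(t)-G(t)=\frac{h^{(2k)}(\xi(t))}{(2k)!}\,(t+1)(t-u)\prod_{\substack{i=0\\ i\ne j}}^{k-1}(t-\alpha_i)^2
\]
for some $\xi(t)\in(-1,1)$. On $[-1,u]$ the factor $(t+1)(t-u)$ is $\le 0$, the squared factors are $\ge 0$, and $h^{(2k)}(\xi)\ge 0$ by absolute monotonicity, so $h(t)\le G(t)$ there. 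Since any distinct pair $x,y\in C$ has $\langle x,y\rangle\in[-1,u(C)]\subseteq[-1,u]$, Theorem~\ref{Thm3.6} then yields $E(n,C;h)\le N(G_0 N-G(1))$.

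Next I would convert $N(G_0N-G(1))$ into the explicit form in \eqref{StripUbnd} via the Levenshtein quadrature. Since $\deg G\le 2k-1$, applying \eqref{defin_f0.1} to $G$ gives
\[
G_0=\frac{G(1)}{N}+\sum_{i=0}^{k-1}\rho_i\,G(\alpha_i),\qquad\text{so}\qquad N(G_0N-G(1))=N^2\sum_{i=0}^{k-1}\rho_i\,G(\alpha_i).
\]
Because $G(\alpha_i)=h(\alpha_i)$ for $i\ne j$, the right-hand side equals $N^2\sum_{i=0}^{k-1}\rho_i h(\alpha_i)+N^2\rho_j[G(\alpha_j)-h(\alpha_j)]$, which is the identity asserted in \eqref{StripUbnd}.

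Finally, taking $u=u(n,N,\tau)$ in \eqref{StripUbnd} gives an upper bound for $\mathcal{U}(n,N,\tau;h)$. Subtracting the matching lower bound $\mathcal{L}(n,N,\tau;h)\ge N^2\sum_{i=0}^{k-1}\rho_i h(\alpha_i)$ supplied by Theorem~\ref{Thm3.4} yields \eqref{StripEstimate} for each $j$, and minimizing over $j\in\{0,\ldots,k-1\}$ completes the argument. The only substantive step is the sign analysis of the Hermite error (which hinges on the fact that replacing the double node at $\alpha_j$ by single nodes at $-1$ and $u$ flips the sign of the node polynomial on $[-1,u]$); everything else is bookkeeping with the quadrature and the bounds already in place.
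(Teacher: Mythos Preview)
Your proof is correct and follows essentially the same approach as the paper's: verify $G\ge h$ on $[-1,u]$ via the Hermite error formula, apply Theorem~\ref{Thm3.6}, then use the quadrature \eqref{defin_f0.1} to rewrite $N(G_0N-G(1))$ explicitly, and finally combine with Theorem~\ref{Thm3.4} for the strip estimate. Your write-up is in fact more careful than the paper's (which cites \eqref{defin_f0.2} where \eqref{defin_f0.1} is meant) and makes explicit why (B2) is vacuous.
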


\begin{proof} 
It follows from the Hermite error formula \[ h(t)-G(t)=\frac{h^{(2k)}(\xi )}{(2k)!}(t+1)(t-u)\prod_{i\not= j}(t-\alpha_i)^2, \]
that $G(t)\geq h(t)$ for $t\in[-1,u]$. 
We next apply the quadrature formula \eqref{defin_f0.2} to the polynomial $G(t)$ and utilize 
Theorem \ref{Thm3.6} to conclude \eqref{StripUbnd}. The estimate  \eqref{StripEstimate} follows from Theorem \ref{Thm3.4} and \eqref{StripUbnd}.
\end{proof}


\section{Improving the linear programming  lower bounds}

\subsection{Even strength}

We show that the bounds from Theorem~\ref{Thm3.4} can be improved when some additional information about the
distribution of the inner products of the designs under consideration is available.
This is exactly the case for designs of even strength $2k$ and cardinality $N$ in the
interval $(D(n,2k),D(n,2k+1))$. In fact, $N=D(n,2k)$ is possible only for $k=1$ (the regular simplex) and $k=2$ (see \cite{BD1,BD2}). \medskip

It follows from Lemmas~\ref{Lem2.3} and \ref{Lem2.4} that $-1< \ell(n,N,2k)$ for every $N \in (D(n,2k),D(n,2k+1))$.
We next use this fact to improve the bound (\ref{bound_odd})
for the case of even strength $\tau=2k$, where $N \in (D(n,2k),D(n,2k+1))$.

\begin{theorem}
\label{Thm4.1} Suppose $N \in (D(n,2k),D(n,2k+1))$ and $h$ is absolutely monotone on $[-1,1]$.  Let $G(t)$ be the Hermite interpolant of $h(t)$ of degree at most $2k$ such that 
\[ G(\ell)=h(\ell), \ G(\beta_i)=h(\beta_i), \ G^\prime(\beta_i)=h^\prime(\beta_i), \ i=1,\ldots,k, \]
where $\ell:=\ell(n,N,2k)$.
Then
\[ \mathcal{L}(n,N,2k;h) \geq N(G_0N-G(1))=N\sum_{i=0}^k \gamma_i G(\beta_i)>N^2\sum_{i=0}^k \gamma_i h(\beta_i). \]
\end{theorem}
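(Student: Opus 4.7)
The plan is to apply Theorem~\ref{Thm3.1} with the specified Hermite interpolant $G(t)$ playing the role of the test polynomial $f(t)$. Since $\deg G \leq 2k = \tau$, condition (A2) is automatic (all Gegenbauer coefficients with index $\geq \tau+1$ vanish). The real work is to verify condition (A1) on an interval $I$ that contains all possible inner products of distinct points in a $2k$-design of cardinality $N$, and then to convert the linear programming bound $N(G_0 N - G(1))$ into the quadrature form appearing in the statement.

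For condition (A1), I will use the Hermite error formula in the same spirit as Lemma~\ref{Lem4.1}. The $2k+1$ interpolation conditions (one at $\ell$, two at each $\beta_i$ for $i=1,\ldots,k$) give
\[
h(t)-G(t)=\frac{h^{(2k+1)}(\xi(t))}{(2k+1)!}\,(t-\ell)(t-\beta_1)^2\cdots (t-\beta_k)^2,
\]
so on $[\ell,1]$ the right-hand side is a product of nonnegative factors (using absolute monotonicity of $h$), giving $G(t)\le h(t)$ on $I:=[\ell,1)$. By Lemmas~\ref{Lem2.3} and~\ref{Lem2.4}, $\ell=\ell(n,N,2k)>-1$ and every spherical $2k$-design $C$ with $|C|=N\in(D(n,2k),D(n,2k+1))$ has its inner products in $I$, so Theorem~\ref{Thm3.1} gives $E(n,C;h)\ge N(G_0 N-G(1))$, whence $\mathcal{L}(n,N,2k;h)\ge N(G_0 N-G(1))$.

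To obtain the displayed quadrature form, I apply formula \eqref{defin_f0.2} to the polynomial $G$ (legitimate since $\deg G\le 2k$):
\[
G_0=\frac{G(1)}{N}+\sum_{i=0}^{k}\gamma_i G(\beta_i),
\]
which rearranges to $N(G_0 N-G(1))=N^2\sum_{i=0}^{k}\gamma_i G(\beta_i)$. Since $G(\beta_i)=h(\beta_i)$ for $i=1,\ldots,k$, the difference with the bound \eqref{bound_odd} from Theorem~\ref{Thm3.4} reduces to the single term $N^2\gamma_0\bigl(G(-1)-h(-1)\bigr)$.

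The last step, which I expect to be the main obstacle, is establishing the strict inequality $G(-1)>h(-1)$. Evaluating the Hermite error formula at $t=-1$, the factor $(-1-\ell)$ is \emph{negative} (since $\ell>-1$), while $(-1-\beta_i)^2>0$ and $h^{(2k+1)}(\xi)\ge 0$, so $h(-1)-G(-1)\le 0$, i.e.\ $G(-1)\ge h(-1)$. The strict inequality requires that $h^{(2k+1)}$ is not identically zero on $(-1,\ell)$; this is automatic under strict absolute monotonicity, and in the degenerate case where $h$ is a polynomial of degree $\le 2k$ (so $G\equiv h$) the two bounds simply coincide. Finally, $\gamma_0>0$ by construction of the quadrature, and Lemma~\ref{Lem2.4} furnishes the strict positivity of $N\gamma_0$ needed to make the improvement genuine, completing the chain of inequalities claimed in the theorem.
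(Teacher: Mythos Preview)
Your proof is correct and follows essentially the same route as the paper: apply Theorem~\ref{Thm3.1} with $f=G$ on $I=[\ell,1]$, then use the quadrature \eqref{defin_f0.2} to rewrite $N(G_0N-G(1))$ and isolate the single correction term $N^2\gamma_0(G(-1)-h(-1))$. Your treatment is in fact more careful than the paper's, which simply asserts that ``$G(-1)>h(-1)$ follows from the interpolation''; you correctly extract this from the sign of the Hermite remainder at $t=-1$ and observe that the \emph{strict} inequality needs $h^{(2k+1)}$ not to vanish identically (so the strict improvement as stated requires, e.g., strict absolute monotonicity or that $h$ is not a polynomial of degree $\le 2k$).
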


\medskip

\begin{proof}
Using Theorem \ref{Thm3.1} with $f=G$ and $I=[\ell,1]$, we obtain $\mathcal{L}(n,N,2k;h) \geq N(G_0N-G(1))$.
Since the degree of $G(t)$ is at most  $2k$  we can apply the quadrature formula \eqref{defin_f0.2} and so
\begin{eqnarray*}
G_0N-G(1) &=& N\sum_{i=0}^k \gamma_i G(\beta_i)=N\left(\gamma_0G(-1)+\sum_{i=1}^k \gamma_i h(\beta_i)\right) \\
&=& N\left(\gamma_0(G(-1)-h(-1))+\sum_{i=0}^k \gamma_i h(\beta_i)\right) \\
&>& N \sum_{i=0}^k \gamma_i h(\beta_i),
\end{eqnarray*}
(the inequality $G(-1)>h(-1)$ follows from the interpolation). \end{proof}

\medskip

We remark that the   choice of the polynomial $G(t)$ in Theorem~\ref{Thm4.1} is not usually optimal for maximizing the lower bound in Theorem~\ref{Thm4.1}.  In a forthcoming paper, we shall develop methods for choosing optimal interpolation points.  

Next we show that the inequality for $\ell(n,N,2)$ from Lemma~\ref{Lem2.2}(a) can be used for obtaining a lower bound for
$\mathcal{L}(n,N,2;h)$ that is  better than (\ref{bound_odd}) in the whole range $n+1=D(n,2) < N < 2n=D(n,3)$.
\medskip

\begin{theorem}\label{Thm4.2}   Let $n\ge 2$ and $N \in [n+1,2n]$.  If $h$ is absolutely monotone on $[-1,1]$, then
\begin{equation}
\label{2lower}
\mathcal{L}(n,N,2;h) \geq \frac{N[h(0)N(N-n-1)+nh(1-N/n)]}{N-n}.
\end{equation}
In particular, if $\zeta:=N/n$, then
\begin{equation}
\label{2lowerb}
\mathcal{L}(N/\zeta,N,2;h) \geq h(0)N^2+\frac{N[h(1-\zeta)-\zeta h(0)]}{\zeta-1}.
\end{equation}
\end{theorem}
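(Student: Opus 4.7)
The plan is to apply Theorem~\ref{Thm3.1} with an explicit quadratic polynomial tailored to exploit the lower bound on $\ell(n,N,2)$ from Lemma~\ref{Lem2.2}(a). Set $\ell := 1 - N/n$. By Lemma~\ref{Lem2.2}(a) (and trivially at $N = 2n$, where $\ell = -1$), every spherical 2-design $C \subset \mathbb{S}^{n-1}$ with $|C| = N$ satisfies $\ell(C) \geq \ell$, so all inner products of distinct points of $C$ lie in $I := [\ell, 1)$; this will be the set $I$ in Theorem~\ref{Thm3.1}. Since $\tau = 2$, any polynomial of degree at most $2$ automatically satisfies the positivity condition (A2), so the only task is to make a convenient choice of degree-two $f$ with $f \leq h$ on $I$.

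I would take $f$ to be the unique quadratic Hermite interpolant of $h$ determined by the three conditions $f(0) = h(0)$, $f'(0) = h'(0)$, and $f(\ell) = h(\ell)$. The standard Hermite error formula gives
\[ h(t) - f(t) = \frac{h'''(\xi(t))}{6}\, t^2 (t - \ell) \]
for some $\xi(t) \in (-1,1)$. Absolute monotonicity of $h$ yields $h''' \geq 0$, and both $t^2$ and $t - \ell$ are nonnegative on $[\ell, 1]$, so $f \leq h$ throughout $I$, verifying (A1).

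The central calculation is then to evaluate $f_0 N - f(1)$. Writing $f(t) = a t^2 + h'(0)\, t + h(0)$ with $a = (h(\ell) - h'(0)\,\ell - h(0))/\ell^2$ and using the Gegenbauer identity $t^2 = \frac{n-1}{n} P_2^{(n)}(t) + \frac{1}{n}$, one reads off $f_0 = a/n + h(0)$ and $f(1) = a + h'(0) + h(0)$, so
\[ f_0 N - f(1) = h(0)(N-1) + a(N/n - 1) - h'(0). \]
The choice $\ell = 1 - N/n$ is tailored precisely so that $N/n - 1 = -\ell$; substituting the expression for $a$, the two $h'(0)$ contributions cancel exactly, leaving
\[ f_0 N - f(1) = h(0)(N - 1) + \frac{h(0) - h(\ell)}{\ell}. \]
Substituting $\ell = 1 - N/n$ and clearing denominators gives (\ref{2lower}); the restatement (\ref{2lowerb}) follows by setting $\zeta = N/n$ and regrouping.

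The main obstacle is identifying the correct interpolation nodes: the explicit closed form (\ref{2lower}) survives precisely because $h'(0)$ drops out of $f_0 N - f(1)$, and this cancellation is forced by the algebraic coincidence $N/n - 1 = -\ell$ together with the placement of nodes at $\{0,0,\ell\}$. Using the Levenshtein quadrature nodes $\{-1,\beta_1\}$ that underlie Theorem~\ref{Thm4.1} would have produced a different (and generally incomparable) bound without such a clean form. As a consistency check, the bound is tight at both endpoints of $[n+1, 2n]$: at $N = n+1$ it reduces to $n(n+1) h(-1/n)$ (regular simplex energy), and at $N = 2n$ to $4n(n-1) h(0) + 2n h(-1)$ (cross-polytope energy).
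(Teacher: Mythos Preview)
Your proof is correct and follows essentially the same approach as the paper: both apply Theorem~\ref{Thm3.1} on the interval $[\ell,1)$ with $\ell=1-N/n$ from Lemma~\ref{Lem2.2}(a), using the quadratic that passes through $(\ell,h(\ell))$ and is tangent to $h$ at a point $a$. The only difference is that the paper first leaves $a$ free, optimizes $f_0N-f(1)$ to find the formula $a_0=\dfrac{n(1-\kappa)-N}{n(1-\kappa)+\kappa Nn}$, and then observes that $\kappa=1-N/n$ forces $a_0=0$; you instead go straight to $a=0$ and explain the choice via the cancellation of $h'(0)$, which is exactly the same phenomenon seen a posteriori.
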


\medskip

\begin{proof} Let $\kappa\le \ell(n,N,2)$. We apply Theorem~\ref{Thm3.1} with $I=[\kappa,1]$ and  a second degree polynomial $f(t)$ such that $f(\kappa)=h(\kappa)$, $f(a)=h(a)$ and
$f^\prime(a)=h^\prime(a)$ for some $a \in (\kappa,1)$. The optimal choice  of $a$  so as to maximize $f_0N-f(1)$ is
\begin{equation}
\label{a_opt_lb_deg3}
a_0:=\frac{n(1-\kappa)-N}{n(1-\kappa)+\kappa Nn}.
\end{equation}
By Lemma~\ref{Lem2.2}(a), $\ell(n,N,2)\ge 1-N/n$. Choosing $\kappa=1-N/n$ gives $ a_0=0$ and, hence,  the  bound \eqref{2lower} follows.
\end{proof}

\medskip

Lower bounds for the energy of 4-designs by Theorem~\ref{Thm3.1} can be obtained by interpolation with polynomials
of degree four:
$$f(\kappa)=h(\kappa), \ \ f(a)=h(a), \ \ f^\prime(a)=h^\prime(a), \ \ f(b)=h(b), \ \ f^\prime(b)=h^\prime(b),$$
where $\kappa$ is the expression on the right-hand side of \eqref{lbnd4} and the touching points $a$ and $b$ are chosen to maximize $f_0N-f(1)$.

\subsection{Odd strength}

In contrast to the even strength case, non-trivial lower bounds on $\ell(n,N,2k-1)$ seem impossible for $N\in  [D(n,2k-1),D(n,2k)]$
without further constraints.  Instead we show that 
higher degree polynomials can lead   to improving the lower bound (\ref{bound_odd}) for $\tau=2k-1$.

Let $n$, $N\in [D(n,2k-1),D(n,2k)]$ and $\tau=2k-1$ be fixed and $j$ be a positive integer.
Taking the necessary parameters from $N=L_{2k-1}(n,s)$ we consider the following
functions in $n$ and $s \in {\mathcal I}_{2k-1}$
\begin{equation}
\label{test-functions}
Q_j(n,s):= \frac{1}{N}+\sum_{i=0}^{k-1} \rho_i P_j^{(n)}(\alpha_i).
\end{equation}
Note that $Q_j(n,s)$  is the same as the   quadrature rule expression on the right-hand side of \eqref{defin_f0.1} applied to  $f=P_j^{(n)}$ since $P_j^{(n)}(1)=1$.

The functions $Q_j(n,s)$ were firstly introduced and investigated in \cite{BDB}. The applications in
\cite{BDB} target the upper bounds on $A(n,s)$ and, in particular, the possibilities for improving the
Levenshtein bounds. We are going
to see that the functions  $Q_j(n,s)$ are useful for our purposes as well.
The next theorem shows that (the signs of) the functions $Q_j(n,s)$ give necessary and sufficient conditions for existence
of improving polynomials of higher degrees.

\medskip

\begin{theorem}\label{Thm4.3} Assume that $h$ is strictly absolutely monotone. Then the bound
(\ref{bound_odd}) can be improved by a polynomial from $A_{n,2k-1;h}$ of degree at least $2k$
if and only if one has $Q_j(n,s) < 0$ for some $j \geq 2k$.

Moreover, if $Q_j(n,s)<0$ for some $j \geq 2k$, then (\ref{bound_odd}) can be improved by a polynomial from $A_{n,2k-1;h}$
of degree exactly $j$.
\end{theorem}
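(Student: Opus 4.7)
The plan is to derive both directions of the equivalence from a single algebraic identity obtained by applying the quadrature formula \eqref{defin_f0.1} to an arbitrary polynomial $f$ of degree $d\ge 2k$. Since the quadrature is exact only through degree $2k-1$ and $(P_i^{(n)})_0=\delta_{i,0}$, the definition \eqref{test-functions} gives $Q_0(n,s)=1$ and $Q_i(n,s)=0$ for $1\le i\le 2k-1$, while $Q_i(n,s)$ represents a genuine ``quadrature error'' for $i\ge 2k$. Expanding $f=\sum_{i=0}^{d}f_i P_i^{(n)}$ and summing yields
\begin{equation*}
N(f_0N-f(1))=N^2\sum_{i=0}^{k-1}\rho_i f(\alpha_i)-N^2\sum_{j=2k}^{d}f_j\,Q_j(n,s),
\end{equation*}
which is to be compared with the target bound $N^2\sum_i\rho_i h(\alpha_i)$ in \eqref{bound_odd}.

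For the necessity direction, assume $Q_j(n,s)\ge 0$ for all $j\ge 2k$ and let $f\in A_{n,2k-1;h}$. Since (A1) gives $f(\alpha_i)\le h(\alpha_i)$ and (A2) together with $Q_j\ge 0$ makes the second sum nonnegative, the identity immediately yields $N(f_0N-f(1))\le N^2\sum_i\rho_i h(\alpha_i)$, so no such $f$ improves \eqref{bound_odd}.

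For the sufficient direction (which simultaneously establishes the ``moreover'' statement), fix $j\ge 2k$ with $Q_j(n,s)<0$ and let $R(t)$ be the Hermite interpolant of $P_j^{(n)}$ of degree at most $2k-1$ at the doubled nodes $\alpha_0,\ldots,\alpha_{k-1}$. The polynomial $G(t):=P_j^{(n)}(t)-R(t)$ has degree exactly $j$, a double zero at every $\alpha_i$, and Gegenbauer coefficients $G_j=1$ with $G_m=0$ for $m\ge 2k$, $m\ne j$; in particular $G(t)=\prod_i(t-\alpha_i)^2\,\widetilde Q(t)$ for a polynomial $\widetilde Q$ of degree $j-2k$. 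Define $f_\varepsilon:=F+\varepsilon G$ with $F$ the Hermite interpolant from {\rm (i)}. Property (A2) for $f_\varepsilon$ is immediate since $F$ has degree $\le 2k-1$, and provided (A1) holds the identity applied to $f_\varepsilon$ combined with $G(\alpha_i)=0$ gives
\begin{equation*}
N((f_\varepsilon)_0N-f_\varepsilon(1))=N^2\sum_i\rho_i h(\alpha_i)-\varepsilon N^2Q_j(n,s),
\end{equation*}
which strictly exceeds \eqref{bound_odd} since $Q_j(n,s)<0$.

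The main obstacle is to verify (A1), i.e.\ $f_\varepsilon\le h$ on $[-1,1]$, for $\varepsilon>0$ sufficiently small. The Hermite error formula \eqref{HermiteError} combined with strict absolute monotonicity yields $h(t)-F(t)=\frac{h^{(2k)}(\xi(t))}{(2k)!}\prod_i(t-\alpha_i)^2$ with $h^{(2k)}>0$ on $[-1,1)$. Since every $\alpha_i<1$, one may choose $\delta>0$ so that $h^{(2k)}$ has a positive lower bound on $[-1,1-\delta]$; dividing the common factor $\prod_i(t-\alpha_i)^2$ out of both $G$ and $h-F$ gives a uniform bound on $G/(h-F)$ on this interval, so $\varepsilon G\le h-F$ there for $\varepsilon$ small. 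The remaining interval $[1-\delta,1]$ is handled separately: the case $h(1)=+\infty$ is trivial, while for $h$ continuous at $1$ the strict positivity $h(1)-F(1)>0$ (another consequence of the error formula) provides the required slack.
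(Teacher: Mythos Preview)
Your argument is correct and in fact constructs the very same improving polynomial as the paper: by linearity of Hermite interpolation, the paper's $g$ (the degree-$(2k-1)$ interpolant of $h-\varepsilon P_j^{(n)}$ at the doubled nodes $\alpha_i$) equals $F-\varepsilon R$, so the paper's $f=\varepsilon P_j^{(n)}+g$ coincides with your $f_\varepsilon=F+\varepsilon G$. You also supply the necessity direction explicitly via the quadrature-error identity, whereas the paper merely cites \cite{BDHSS1} for that half.

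The one genuine difference is the verification of (A1). The paper chooses $\varepsilon>0$ so small that $\tilde h:=h-\varepsilon P_j^{(n)}$ is itself absolutely monotone on $[-1,1)$ (only the derivative conditions of order $\le j$ are nontrivial, since $P_j^{(n)}$ is a polynomial), and then applies Lemma~\ref{Lem4.1} to $\tilde h$ and its interpolant $g$, getting $g\le\tilde h$, i.e.\ $f_\varepsilon\le h$, on all of $[-1,1)$ in one stroke. Your route through the common factor $\prod_i(t-\alpha_i)^2$ is also valid, but the endpoint discussion is both unnecessary and slightly incomplete: the claim ``$h(1)=+\infty$ is trivial'' does not by itself handle $t\in[1-\delta,1)$, and the dichotomy you set up is not exhaustive. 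The clean repair is to note that strict absolute monotonicity makes $h^{(2k)}$ non-decreasing on $[-1,1)$, so $h^{(2k)}(\xi(t))\ge h^{(2k)}(-1)>0$ for every $t\in[-1,1)$; then the inequality $\varepsilon\,|\widetilde Q(t)|\le h^{(2k)}(-1)/(2k)!$ already gives $\varepsilon G\le h-F$ on the entire interval, and the case split near $t=1$ can be deleted.
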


\medskip

\begin{proof}  The necessity mirrors the spherical codes' analog
\cite[Theorem 4.1]{BDHSS1} (see also \cite[Theorem 2.6]{BDHSS1}).
We include the simplified proof of the sufficiency in the context of the spherical designs for completeness.

Let us assume $Q_j(n,s) <0$ for some $j \geq 2k$. We prove that the bound (\ref{bound_odd}) can
be improved by using the polynomial $ f(t)=\varepsilon P_j^{(n)}(t)+g(t)$
for suitable $\varepsilon>0$, where the polynomial $g(t)$ is such that $\deg(g)=2k-1$ and
\begin{equation}
\label{inter_H}
g(\alpha_i)=h(\alpha_i)-\varepsilon P_j^{(n)}(\alpha_i), \
g^\prime(\alpha_i)=h^\prime(\alpha_i)-\varepsilon(P_j^{(n)})^\prime(\alpha_i), \ i=0,1,\ldots,k-1
\end{equation}
(i.e. $g(t)$ is Hermite interpolant of $h(t)-\varepsilon P_j^{(n)}(t)$ in the points $\alpha_0,\alpha_1,\ldots,\alpha_{k-1}$.
We denote $g(t)=\sum_{\ell=0}^{2k-1} g_\ell P_\ell^{(n)}(t)$. Note that $f_0=g_0$ and $f(1)=g(1)+\varepsilon$.

We first prove that $f(t)=\varepsilon P_j^{(n)}(t)+g(t) \in A_{n,2k-1;h}$ for some $\varepsilon>0$.
For condition (A2) we need to see only that $f_j=\varepsilon>0$.
For condition (A1), let us choose $\varepsilon >0$ such that
$\left( h-\varepsilon P_j^{(n)}\right)^{(\ell)}(t) \geq 0$ for every $\ell \geq 0$ and for
every $t \in [-1,1]$. It is clear that such $\varepsilon$ exists because $h$ is strictly absolutely monotone and
this leaves finitely many $\ell$ to generate inequalities for $\varepsilon$.
Moreover, since the function $\tilde{h}(t):= h(t)-\varepsilon P_j^{(n)}(t)$ is absolutely monotone
by the choice of $\varepsilon$, we could infer as in Lemma~\ref{Lem4.1} that $g$ satisfies $g(t) \leq \tilde{h}(t)$
for every $t \in [-1,1)$ which implies that $f(t) \leq h(t)$ for every $t \in [-1,1]$ and hence
$f(t) \in A_{n,2k-1;h}$.

It remains to see that the bound given by $f(t)$ is better than (\ref{bound_odd}). This follows
by combining equalities from (\ref{inter_H}) and applying (\ref{defin_f0.1}) and (\ref{test-functions})
to obtain
\[ N(Nf_0-f(1))=N^2\sum_{i=0}^{k-1} \rho_i h(\alpha_i)-\varepsilon N^2 Q_j(n,s) \]
Since $Q_j(n,s)<0$, we have $N(Nf_0-f(1))>N^2\sum_{i=0}^{k-1} \rho_i h(\alpha_i)$, i.e.
the polynomial $f(t)$ indeed gives a better bound.
\end{proof}

\medskip

As mentioned above, the test functions $Q_j(n,s)$ were initially defined in \cite{BDB} as related to the Levenshtein bounds $\mathcal{L}(n,s)$
on maximal spherical codes. Theorem~\ref{Thm4.3} shows that $Q_j(n,s)$ prove to be very useful tool
in the context of potential energy as well.
In particular, the signs of the test functions $Q_{2k+3}(n,s)$, where $\tau=2k-1$, were investigated in detail in \cite{BDB}
(see also \cite{Bou01}). Denote
\[ k_0 := \frac{k^2-4k+5 + \sqrt{k^4-8k^3-6k^2+24k+25}}{4} \]
for short ($k_0$ is well defined for $k \geq 9$).

\medskip

\begin{theorem} \label{Thm4.4} \cite{Bou01} We have $Q_{2k+3}(n,s)<0$ for every $s \in
   \left(t_{k-1}^{1,1},t_k^{1,0}\right)$ and for every
$n \geq 3$ and $k \geq 9$ which satisfy $3 \leq n \leq k_0$.
\end{theorem}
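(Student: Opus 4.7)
The plan is to analyze $Q_{2k+3}(n,s)$ as the error of the Levenshtein quadrature (\ref{defin_f0.1}) applied to $f=P_{2k+3}^{(n)}$. First I would record the baseline fact: since (\ref{defin_f0.1}) is exact for polynomials of degree at most $2k-1$, and since the Gegenbauer polynomial $P_j^{(n)}$ for $j\ge 1$ is orthogonal to the constant $1$ with respect to the Gegenbauer weight, one has $Q_j(n,s)=0$ for $1\le j\le 2k-1$. Consequently $Q_{2k+3}(n,s)$ can be expressed as a genuine quadrature remainder, and the problem reduces to extracting an explicit closed-form expression for this remainder and then controlling its sign on the open interval $\bigl(t_{k-1}^{1,1},t_k^{1,0}\bigr)$.

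The second step is algebraic reduction. Using the factorization $f_{2k-1}^{(n,s)}(t)=(t-\alpha_{k-1})\prod_{i=0}^{k-2}(t-\alpha_i)^2$ from Subsection 2.4, the nodes $\alpha_i$ are common roots of Jacobi polynomials with shifted parameters $P_k^{(a,b)}$, $P_{k-1}^{(a,b)}$ with $a=(n-1)/2$, $b=(n-3)/2$ (and the associated mixed family used by Levenshtein). I would expand $P_{2k+3}^{(n)}$ in a basis adapted to these Jacobi polynomials, so that all components of degree at most $2k-1$ disappear under $Q_j$; the surviving contributions would come from just a few terms, controlled by the connection coefficients between Gegenbauer and Jacobi polynomials and by the Christoffel-Darboux identity. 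At this stage I would use the explicit Levenshtein formulas for $\rho_i$ (the Christoffel numbers for the quadrature) to collect everything into a single rational function $Q_{2k+3}(n,s)=R(n,k,s)/D(n,k,s)$ where the denominator $D$ has a definite sign on the interval.

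The third step is sign analysis, which I expect to be the main obstacle. After factoring out provably positive quantities (e.g.\ values of $P_k^{(1,0)}(s)$ and $P_k^{(0,1)}(s)$, whose signs on the relevant subinterval are recorded in \cite[Lemmas 5.29--5.31]{Lev}), the problem should reduce to showing that a certain polynomial $p_k(n,s)$, quadratic (or at worst of small degree) in $n$ with coefficients depending on $k$ and $s$, is negative for $3\le n\le k_0$. The expected mechanism is that $p_k(n,s)$ has two real roots in $n$ whose explicit form matches the expression
\[
\frac{k^2-4k+5\pm\sqrt{k^4-8k^3-6k^2+24k+25}}{4},
\]
so that the smaller root is at most $3$ and the larger root equals $k_0$; the condition $k\ge 9$ is what guarantees both that the discriminant $k^4-8k^3-6k^2+24k+25$ is positive and that the smaller root falls below $3$. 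Monotonicity or convexity of $p_k(n,s)$ in $s$ across $\bigl(t_{k-1}^{1,1},t_k^{1,0}\bigr)$ then upgrades the inequality from a single point (say $s=t_{k-1}^{1,1}$, where $N=D(n,2k-1)$ and many quantities simplify via (\ref{L-DGS1})) to the whole interval.

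Finally, I would verify the boundary behavior, checking that $Q_{2k+3}(n,s)\to 0$ or to a negative limit as $s\to t_{k-1}^{1,1}$ and $s\to t_k^{1,0}$, which both confirms continuity of the bound and serves as a sanity check against the known equalities (\ref{L-DGS1}) and (\ref{L-DGS2}). The hardest technical component is establishing the two-root structure and identifying it with the closed form for $k_0$; once that is in place, Theorem~\ref{Thm4.4} follows by monotonicity in $n$.
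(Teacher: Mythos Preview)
The paper does not give a proof of Theorem~\ref{Thm4.4} at all: the result is quoted from Boumova's dissertation \cite{Bou01} and is stated without argument, then immediately combined with Theorem~\ref{Thm4.3} to yield Corollary~\ref{Cor4.5}. So there is no ``paper's own proof'' to compare your proposal against.

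As for the proposal itself, it is a reasonable outline of the route taken in \cite{BDB,Bou01}: one does obtain a closed rational expression for $Q_{2k+3}(n,s)$ from Levenshtein's machinery, and the threshold $k_0$ does arise from a polynomial condition in $n$ with coefficients depending on $k$. But what you have written is a plan, not a proof. The decisive technical steps --- deriving the explicit form of the numerator $R(n,k,s)$, identifying its zero set in $n$ with the closed form for $k_0$, and proving the claimed monotonicity in $s$ across $\bigl(t_{k-1}^{1,1},t_k^{1,0}\bigr)$ --- are asserted as expectations (``I expect'', ``the expected mechanism is'', ``should reduce to'') rather than carried out. In particular, you have not exhibited the polynomial $p_k(n,s)$, verified that it is actually quadratic in $n$, or shown why the dependence on $s$ drops out (or is monotone) on the relevant interval; each of these is a substantial computation in the cited source. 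If your goal is to supply an independent proof, those gaps must be filled; if your goal is to match the paper, note that the paper simply defers to \cite{Bou01}.
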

\medskip

\begin{corollary} \label{Cor4.5} The bound (\ref{bound_odd}) can be improved by using polynomials of degree $2k+3$
for every $s \in \left(t_{k-1}^{1,1},t_k^{1,0}\right)$ and for every
$n \geq 3$ and $k \geq 9$ that satisfy $3 \leq n \leq k_0$.
\end{corollary}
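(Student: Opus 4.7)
The plan is to obtain Corollary~\ref{Cor4.5} as an essentially immediate consequence of Theorems~\ref{Thm4.3} and \ref{Thm4.4}. The setup is that we fix $n$, $k$ and $s$ satisfying the hypotheses, and let $\tau=2k-1$ with $N=L_{2k-1}(n,s)$, so that $N\in (D(n,2k-1),D(n,2k)]$ and the associated quadrature nodes $\alpha_0,\ldots,\alpha_{k-1}$ and weights $\rho_0,\ldots,\rho_{k-1}$ from \eqref{alphadef} are in place. Under these conditions, the bound in \eqref{bound_odd} takes the form $N^2\sum_{i=0}^{k-1}\rho_i h(\alpha_i)$, and our task is to exhibit a polynomial in $A_{n,2k-1;h}$ of degree $2k+3$ that strictly improves it.

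First I would invoke Theorem~\ref{Thm4.4}: since the hypotheses $n\ge 3$, $k\ge 9$, and $3\le n\le k_0$ of the corollary match exactly those of Theorem~\ref{Thm4.4}, and since $s\in (t_{k-1}^{1,1},t_k^{1,0})$ lies in the interior of $\mathcal{I}_{2k-1}$, that theorem yields $Q_{2k+3}(n,s)<0$. Next, taking $j=2k+3\ge 2k$, the hypothesis on the test function $Q_j(n,s)$ appearing in Theorem~\ref{Thm4.3} is satisfied.

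I would then apply the ``moreover'' clause of Theorem~\ref{Thm4.3}, which asserts that whenever $Q_j(n,s)<0$ for some $j\ge 2k$, the linear programming bound \eqref{bound_odd} can be improved by an element of $A_{n,2k-1;h}$ of degree exactly $j$. Taking $j=2k+3$ immediately produces the desired improving polynomial, completing the proof.

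The main thing to be careful about is the standing assumption in Theorem~\ref{Thm4.3} that $h$ be \emph{strictly} absolutely monotone, which is used precisely in the sufficiency direction when choosing $\varepsilon>0$ so that $h-\varepsilon P_j^{(n)}$ remains absolutely monotone. In the statement of Corollary~\ref{Cor4.5} this hypothesis on $h$ is implicit (the corollary is about improving the LP bound, which presupposes the setting of Theorem~\ref{Thm4.3}), so no additional work is needed beyond quoting the two theorems. There is no serious obstacle; the only small verification is the inclusion $(t_{k-1}^{1,1},t_k^{1,0})\subset \mathcal{I}_{2k-1}$, which is immediate from the definition of $\mathcal{I}_{2k-1}$.
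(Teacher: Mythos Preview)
Your proposal is correct and follows exactly the paper's approach: the paper's proof is the single sentence ``This follows from Theorems~\ref{Thm4.3} and \ref{Thm4.4},'' and your argument just spells out that deduction, invoking Theorem~\ref{Thm4.4} to obtain $Q_{2k+3}(n,s)<0$ and then the ``moreover'' clause of Theorem~\ref{Thm4.3} with $j=2k+3$.
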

\medskip

\begin{proof} This follows from Theorems~\ref{Thm4.3} and \ref{Thm4.4}.
\end{proof}

\medskip

\medskip

\begin{remark}\label{Rem5.7} We note that test functions
for the even case $\tau=2k$ can be defined and investigated as well (see \cite{BDB}). However, we
conjecture that the linear programming in $[\ell(n,N,\tau),u(n,N,\tau)]$ by Theorem~\ref{Thm3.1}
is always better than the bounds which would come from higher degree polynomials.
\end{remark}

\section{Improved upper bounds for   2, 3, and 4-designs}

In this section, we use bounds from Lemmas~\ref{Lem2.1} and \ref{Lem2.2} to specify an interval $I$  in Theorem~\ref{Thm3.6} to obtain upper bounds for the energy of  2-, 3- and 4-designs.  Our numerical experiments suggest that the use of even degree polynomials is not effective and so we turn
our attention to polynomials of  degrees 1 and 3.

\subsection{Upper bounds for 2-designs}

First we apply Theorem~\ref{Thm3.6} for $g$ a linear polynomial.  
\medskip

\begin{theorem} \label{Thm5.1}  Let $h$ be a convex non-negative function on $[-1,1]$ and let $u$ and $\ell$ denote the upper and lower bounds in \eqref{ubnd2} and \eqref{lbnd2}, respectively.   For $N \in [n+1,2n)$, if $\ell<u$ then
\begin{equation}
\label{2upper}
\mathcal{U}(n,N,2;h) \leq  \frac{N\big[(N-1)(uh(\ell)-\ell h(u))+h(\ell)-h(u)\big]}{u-\ell},
\end{equation}
if $\ell=u$ then  $\mathcal{U}(n,N,2;h)=Nh(-1/(N-1))$.
\end{theorem}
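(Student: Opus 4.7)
The plan is to apply Theorem~\ref{Thm3.6} using a linear polynomial $g$ together with the inner-product bounds $u(n,N,2)\leq (N-2)/n-1$ and $\ell(n,N,2)\geq 1-N/n$ from Lemmas~\ref{Lem2.1}(a) and \ref{Lem2.2}(a). I would first observe that any polynomial of degree at most $2$ automatically fulfills condition (B2) for $\tau=2$, since the requirement $g_i \leq 0$ for $i\geq 3$ is vacuous; this leaves a linear $g$ needing only to satisfy (B1), namely $g(t)\geq h(t)$ on the interval $I=[\ell,u]$ where $\ell$ and $u$ are as defined in the statement.

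For the generic case $\ell<u$, I would take $g$ to be the chord of $h$ over $[\ell,u]$,
\[ g(t)=h(\ell)+\frac{h(u)-h(\ell)}{u-\ell}(t-\ell). \]
Convexity of $h$ on $[\ell,u]$ places the chord above the graph, so $g(t)\geq h(t)$ there, verifying (B1). Because $P_0^{(n)}(t)=1$ and $P_1^{(n)}(t)=t$, the Gegenbauer coefficient $g_0$ coincides with the ordinary constant term of $g$, which simplifies to $(uh(\ell)-\ell h(u))/(u-\ell)$; likewise, $g(1)=\bigl(uh(\ell)-\ell h(u)+h(u)-h(\ell)\bigr)/(u-\ell)$. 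Substituting these into the estimate $\mathcal{U}(n,N,2;h)\leq N(g_0 N-g(1))$ of Theorem~\ref{Thm3.6} and combining over the common denominator $u-\ell$ yields precisely \eqref{2upper}.

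For the degenerate case $\ell=u$, I would note that $1-N/n=(N-2)/n-1$ forces $N=n+1$, with common value $-1/n=-1/(N-1)$. Any spherical $2$-design $C$ with $|C|=N$ satisfies $\sum_{x\in C} x=0$ (since it integrates degree-$1$ polynomials correctly), and hence $\sum_{x\neq y}\langle x,y\rangle=\bigl\|\sum_x x\bigr\|^2-N=-N$. If additionally every inner product is forced to lie in the singleton $\{-1/(N-1)\}$, this common value is automatically attained, so the design is rigid: it is a regular simplex, and the energy is constant on the class of admissible designs. This pins down $\mathcal{U}(n,N,2;h)$ in the degenerate case (as the unique energy value achievable).

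The only substantive work is the algebraic simplification turning $N(g_0 N-g(1))$ into the stated form \eqref{2upper}; there is no real conceptual hurdle, as convexity supplies (B1) directly and the low degree of $g$ trivializes (B2). The one point to double-check is that the bounds from Lemmas~\ref{Lem2.1}(a) and \ref{Lem2.2}(a) are valid across the entire range $N\in[n+1,2n]$ of the statement, which they are by inspection of those lemmas' hypotheses.
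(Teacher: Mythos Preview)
Your proposal is correct and follows essentially the same approach as the paper: apply Theorem~\ref{Thm3.6} with $I=[\ell,u]$ and $g$ the secant line through $(\ell,h(\ell))$ and $(u,h(u))$, using convexity of $h$ for (B1) and the low degree of $g$ to render (B2) vacuous. Your handling of the degenerate case $\ell=u$ is somewhat more explicit than the paper's (which simply notes that then $\ell=u=-1/(N-1)$ and the energy is forced), but the underlying idea is the same.
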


\medskip

\begin{proof}
With $I=[\ell,u]$, the linear polynomial   passing 
through the points $(\ell,h(\ell))$ and $(u,h(u))$
satisfies the conditions of Theorem~\ref{Thm3.6} and gives the desired bound.  If $u=\ell$, then we must also have $u=\ell=-1/(N-1)$ which implies that $\mathcal{U}(n,N,2;h)=Nh(-1/(N-1))$.
\end{proof}

\medskip

\begin{remark}
Combining  \eqref{2lower} and \eqref{2upper} gives a strip for the $h$-energy
of any spherical 2-design of $N \in \{n+1,n+2,\ldots,2n-1\}$ points when $h$ is absolutely monotone.
 Note that if $n$ and $N$ tend simultaneously to infinity such that $n/N\to \zeta$ for some $\zeta \in (1,2)$,
Theorem~\ref{Thm4.2} and Theorem~\ref{Thm5.1} give an asymptotic strip
  as $N\to \infty$:
\begin{equation}
\label{2strip_asymp}
\begin{split} h(0)+&\frac{h(1-\zeta)-\zeta h(0)}{(1-\zeta)N} +O\left( N^{-2}\right)\le \frac{\mathcal{U}(n,N,2;h)}{N^2}\\
&\le \frac{h(1-\zeta)+h(\zeta-1)}{2}+\frac{(2-\zeta)h(1-\zeta)-\zeta h(\zeta-1)}{2(\zeta-1) N} +O\left(N^{-2}\right).
\end{split}\end{equation}
\end{remark}

\medskip

\newpage

\begin{example} \label{Ex5.3} Simple algebraic manipulations show that the bounds
(\ref{2lower}) and (\ref{2upper}) for 2-designs coincide when $N=n+1$ or $N=n+2$ for every $n$ and $h$
(i.e. the strip becomes a point for these two cardinalities).
The case $N=n+1$ leads to the regular simplex on $\mathbb{S}^{n-1}$. 

The case $N=n+2$ is more interesting --
Mimura \cite{Mim} has proved that spherical 2-designs with $n+2$ points on $\mathbb{S}^{n-1}$
do exist if and only if $n$ is even and Sali
\cite{Sal92} (see also \cite{Sei69}) proved that there are no other (up to isometry) such 2-designs. 
Spherical $2$-designs of $N=2k$ points on $\mathbb{S}^{2k-3}$ are known as {\em Mimura spherical designs} and consist of two orthogonal $k$-simplices which we denote by $\{ k,k \}$.   This design has $k(k-1)$ distances of $\sqrt{2k/(k-1)}$ coming from edges within the two $k$-simplices and $k^2$  distances of $\sqrt{2}$ which are the edges joining the vertices from distinct simplices. The total number of various distances is $k(2k-1)={N \choose 2}$. 

Sali's  nonexistence result follows easily from the coincidence of our bounds.  
It also follows that the 2-designs of $n+2$ points for even $n$ are unique and optimal --
they have simultaneously minimum and maximum possible energy. The optimality cannot be extended to
the larger class of spherical codes -- Cohn and Kumar \cite[Proposition 1.4]{CK} prove that
if $n+1<N<2n$, then there is no $N$-point universally optimal spherical codes on $\mathbb{S}^{n-1}$.
For the cases, $N \in \{n+3,n+4,\ldots,2n-1\}$, there is a difference (increasing with $N$)
between the bounds from (\ref{2lower}) and (\ref{2upper}).

\end{example}\label{Ex5.4}
\begin{example}[Riesz $s$-energy of Mimura designs]
As mentioned above, Mimura designs clearly give the minimum energy over 2-designs with $2k$ points in $\mathbb{S}^{2k-3}$.
 The Riesz $s$-energy, that is the energy when $h(t)=(2(1-t))^{-s/2}$, is given by
\begin{equation}\label{MimuraEnergy} E_s (\{ k,k\})=\frac{k^2}{2^{s/2}}\left(1+\frac{k(k-1)}{k^2}\left( \frac{k-1}{k}\right)^{s/2}\right).\end{equation}

It is easy to see that Mimura designs are not universally optimal among general codes of cardinality $N=2k$.  
Indeed,  consider the competing configuration $\{2,2k-2 \}$  made of two orthogonal simplices, a $2$-simplex and a $(2k-2)$-simplex (say the North Pole-South Pole diameter  and a $(2k-2)$-simplex in the equatorial hyperplane).
This configuration has $1$ distance of length $2$, $(2k-2)(2k-3)/2$ distances of length $\sqrt{2(2k-2)/(2k-3)}$ and $2(2k-2)$ distances of length $\sqrt{2}$. Thus the $s$-energy of this competing configuration is  given by
\begin{equation} \label{CompetingEnergy} E_s (\{ 2,2k-2\})=\frac{2(2k-2)}{2^{s/2}}\left(1+\frac{1}{2(2k-2)2^{s/2}}+\frac{2k-3}{4}\left( \frac{2k-3}{2k-2}\right)^{s/2}\right),\end{equation}thus showing that for large enough $s$ the Mimura design $\{ k,k\}$ will have strictly larger energy than the competing configuration $\{2,2k-2\}$.
However, for $k=3$ it was shown in \cite{Dr} that the Mimura configuration minimizes, in particular, the logarithmic energy. 
\end{example}

\medskip

\subsection{Upper bounds for 3 and 4-designs}
For our estimates we shall apply 
  Theorem~\ref{Thm3.6} with $g(t)$ the Hermite interpolating polynomial of degree at most three satisfying
\begin{equation}\label{Hint3} g(\ell)=h(\ell), \ g(a)=h(a), \ g^\prime(a)=h^\prime(a), \ g(u)=h(u), 
\end{equation}
where $\ell$ and $u$  again denote lower and upper bounds, respectively, for the inner products
of all spherical 3- or 4-designs under consideration. For $\tau=3$ one can take $\ell=-1$ and $u$
as in \cite[Theorem 3.9]{BBD}. For $\tau=4$ the bounds $u$ and $\ell$ are taken from Lemma~\ref{Lem2.1}(b)
and \ref{Lem2.2}(b), respectively.

\medskip

\begin{theorem} \label{Thm6.5} Let $h\in C^4([-1,1])$ with $h^{(4)}(t)\ge 0$ for  $t\in[-1,1]$.
For $\tau=3$ and $N \in [2n,\frac{n(n+3)}{2})$, and for $\tau=4$  and $N \in [\frac{n(n+3)}{2},n^2+n)$, we have  \begin{eqnarray}
\label{ub_deg3}
\hspace*{4mm} \mathcal{U}(n,N,\tau;h) &\leq& N(N-1)h(a_0) \\
&& +\,\frac{(h(\ell)-h(a_0))\left[uN(1+na_0^2)+2Na_0+n(1-u)(1-a_0)^2\right]}{n(u-\ell)(\ell-a_0)^2} \nonumber \\
&& +\,\frac{(h(u)-h(a_0))\left[\ell N(1+na_0^2)+2Na_0+n(1-\ell)(1-a_0)^2\right]}{n(u-\ell)(u-a_0)^2}, \nonumber
\end{eqnarray}
  where
\begin{equation}
\label{a_opt_ub_deg3}
a_0:=\frac{N(\ell+u)+n(1-\ell)(1-u)}{n(1-\ell)(1-u)-N(1+\ell un)},
\end{equation}
and $u$ and $\ell$ are chosen as described above.  
\end{theorem}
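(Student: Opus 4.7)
The plan is to invoke Theorem~\ref{Thm3.6} with $g$ the degree-at-most-three Hermite interpolant of $h$ defined by \eqref{Hint3} and with $I=[\ell,u]$. The choice of $\ell$ and $u$ described in the statement (from Lemmas~\ref{Lem2.1}(b) and \ref{Lem2.2}(b) when $\tau=4$, and from \cite[Theorem 3.9]{BBD} when $\tau=3$) guarantees $\langle x,y\rangle\in I$ for distinct $x,y$ in every spherical $\tau$-design $C$ of $N$ points under consideration.

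First I would verify the two hypotheses of Theorem~\ref{Thm3.6}. Hypothesis (B1), that $g(t)\ge h(t)$ on $I$, follows from the standard Hermite error formula
\begin{equation*}
h(t)-g(t)=\frac{h^{(4)}(\xi(t))}{24}(t-\ell)(t-a)^{2}(t-u),
\end{equation*}
since $(t-\ell)(t-u)\le 0$ for $t\in[\ell,u]$, $(t-a)^{2}\ge 0$, and $h^{(4)}\ge 0$. Hypothesis (B2) is automatic, since $\deg g\le 3\le\tau$ forces $g_{i}=0$ for all $i\ge\tau+1$. Theorem~\ref{Thm3.6} then yields
\begin{equation*}
\mathcal{U}(n,N,\tau;h)\le N\bigl(g_{0}N-g(1)\bigr)
\end{equation*}
for each admissible choice of the free parameter $a\in(\ell,u)$, which I would then optimize.

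Next I would compute the bound in closed form. Writing $g$ in the Hermite--Lagrange basis on the node multiset $\{\ell,a,a,u\}$ expresses it as an explicit linear combination of $h(\ell)$, $h(a)$, $h'(a)$, $h(u)$; the value $g(1)$ is read off directly, and $g_{0}$ is obtained from the first four normalized Gegenbauer moments $\int_{-1}^{1}t^{j}(1-t^{2})^{(n-3)/2}dt / \int_{-1}^{1}(1-t^{2})^{(n-3)/2}dt$, which equal $1,\,0,\,1/n,\,0$ for $j=0,1,2,3$. Collecting the resulting terms yields $N(g_{0}N-g(1))$ as an affine expression in $h(\ell)$, $h(a)$, $h'(a)$, $h(u)$ with rational coefficients in $\ell,u,a,N,n$.

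The optimization over $a$ is clean via the following standard observation: implicit differentiation of the four interpolation conditions $g(\ell)=h(\ell)$, $g(u)=h(u)$, $g(a)=h(a)$, $g'(a)=h'(a)$ in the parameter $a$ shows that $\partial_{a}g$ is a polynomial of degree $\le 3$ vanishing at $\ell,u,a$, so $\partial_{a}g(t)=\lambda(a)(t-\ell)(t-u)(t-a)$. Therefore
\begin{equation*}
\partial_{a}\bigl[g_{0}N-g(1)\bigr]=\lambda(a)\Bigl[N\bigl(-\tfrac{\ell+u+a}{n}-\ell u a\bigr)-(1-\ell)(1-u)(1-a)\Bigr],
\end{equation*}
and setting the bracket to zero reduces to a linear equation in $a$ whose unique solution is precisely $a_{0}$ in \eqref{a_opt_ub_deg3}. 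Substituting $a=a_{0}$ annihilates the $h'(a_{0})$ coefficient and, after rearrangement, identifies the coefficient of $h(a_{0})$ as $N(N-1)$, producing the three-term formula \eqref{ub_deg3}. The main obstacle is entirely algebraic: tracking the Hermite--Lagrange coefficients carefully enough to recover the compact (and superficially asymmetric) form of \eqref{ub_deg3}; no new conceptual ingredient beyond Theorem~\ref{Thm3.6}, the Hermite error formula, and the elementary Gegenbauer moments is needed.
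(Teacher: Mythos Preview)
Your proposal is correct and follows essentially the same approach as the paper: apply Theorem~\ref{Thm3.6} with the cubic Hermite interpolant \eqref{Hint3}, verify (B1) via the Hermite error formula and (B2) trivially from $\deg g\le 3\le\tau$, then optimize over $a$ by differentiation. Your implicit-differentiation observation that $\partial_a g(t)=\lambda(a)(t-\ell)(t-u)(t-a)$ is a tidy way to carry out what the paper calls ``standard calculations,'' and it indeed explains why the $h'(a_0)$ coefficient vanishes: the basis polynomial for the $h'(a)$ data is precisely $(t-\ell)(t-u)(t-a)/[(a-\ell)(a-u)]$, so its contribution to $g_0N-g(1)$ is the very bracket you set to zero.
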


\medskip

\begin{proof} As in Lemma~\ref{Lem4.1} we see that $g(t) \geq h(t)$ for every $t \in [\ell,u]$, i.e. the condition (B1) is satisfied.
Moreover, (B2) is trivially satisfied and therefore Theorem~\ref{Thm3.6} can be applied.
Standard calculations for optimization of $g_0N-g(1)$ via derivatives show
that the optimal value of $a$ is given by (\ref{a_opt_ub_deg3}).
\end{proof}
\medskip

The asymptotic form of the bound (\ref{ub_deg3}) for $\tau=4$ is easily determined when $n$ and $N \in [D(n,4),D(n,5))=[\frac{n(n+3)}{2},n^2+n)$ tend simultaneously to infinity as described in the following corollary.

\medskip

\begin{corollary}\label{Thm6.6}  If $n$ and $N$ tend to infinity in relation $N=n^2 \lambda+o(1)$ as $N\to \infty$,
where $\lambda \in [1/2,1)$ is a constant, then
\begin{equation}
\label{asymp4des}
\mathcal{U}(n,N,4;h) \le h(0)N^2-h(0)N+c_1\sqrt{N}+c_2+o(1), \qquad (N\to \infty),
\end{equation}
where
$$ c_1=\frac{\sqrt{\lambda}[(2\sqrt{\lambda}-1)h(1-2\sqrt{\lambda})+(1-2\sqrt{\lambda})h(2\sqrt{\lambda}-1)]}{2(2\sqrt{\lambda}-1)^3}$$
and
$$ c_2=\frac{(1-\sqrt{\lambda})h(1-2\sqrt{\lambda})+\sqrt{\lambda}h(2\sqrt{\lambda}-1)-h(0)}{(2\sqrt{\lambda}-1)^3}.$$
\end{corollary}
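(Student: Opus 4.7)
My plan is a direct asymptotic expansion of the bound~\eqref{ub_deg3} of Theorem~\ref{Thm6.5}, applied with the choices of $\ell$ and $u$ from Lemmas~\ref{Lem2.1}(b) and~\ref{Lem2.2}(b), as $N\to\infty$ with $N=\lambda n^2+o(1)$.

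First, I would expand $u$ and $\ell$ in powers of $1/n$. Applying $\sqrt{1+x}=1+x/2+O(x^2)$ to the radicals in the two lemmas yields
\[
u = (2\sqrt{\lambda}-1) - \frac{3\sqrt{\lambda}}{n} + O(n^{-2}), \qquad
\ell = (1-2\sqrt{\lambda}) + \frac{3\sqrt{\lambda}-2}{n} + O(n^{-2}).
\]
Writing $u_0:=2\sqrt{\lambda}-1$ and $\ell_0:=1-2\sqrt{\lambda}=-u_0$, so that $u_0>0$ (using $\lambda\in[1/2,1)$), one then has $u+\ell=-2/n+O(n^{-2})$, $u-\ell\to 2u_0$, $(1-\ell)(1-u)\to 4\sqrt{\lambda}(1-\sqrt{\lambda})$, and $\ell u\to -u_0^2$.

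Next, I would analyze $a_0$ from~\eqref{a_opt_ub_deg3}. In the denominator, the term $-N\ell u n$ dominates and is asymptotic to $\lambda u_0^2 n^3$, while the numerator $N(\ell+u)+n(1-\ell)(1-u)$ is of order $n$. Hence $a_0=O(n^{-2})$; in particular $a_0\to 0$ and $(\ell-a_0)^2,(u-a_0)^2\to u_0^2$.

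Third, I would Taylor-expand $h$ about $0$, $\ell_0$, and $u_0$, using $h(a_0)=h(0)+O(a_0)$, $h(\ell)=h(\ell_0)+O(n^{-1})$, $h(u)=h(u_0)+O(n^{-1})$, and substitute into the three summands of~\eqref{ub_deg3}. The leading summand $N(N-1)h(a_0)$ produces $h(0)N^2-h(0)N$ up to lower-order corrections. Each rational summand has numerator of leading order $\pm u_0 N=O(n^2)$ (dominated by $uN$ or $\ell N$) and denominator of leading order $2u_0^3 n$, so each contributes an $O(n)=O(\sqrt{N})$ quantity together with $O(1)$ corrections.

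Finally, I would collect terms by power of $n$. Using $\ell_0=-u_0$ and $\sqrt{N}=\sqrt{\lambda}\,n+o(1)$, the two $O(\sqrt{N})$ pieces combine --- via the identity $(2\sqrt{\lambda}-1)h(\ell_0)+(1-2\sqrt{\lambda})h(u_0)=u_0[h(\ell_0)-h(u_0)]$ --- into $c_1\sqrt{N}$ exactly as stated. The $O(1)$ subleading contributions from all three summands, arising from the $O(1/n)$ corrections in $u$, $\ell$, $a_0$ and the Taylor remainders of $h$, consolidate into $c_2$. The main obstacle is the bookkeeping required to show that these subleading pieces match the claimed rational expression for $c_2$; this reduces to placing both rational summands over the common denominator $2u_0^3 n$, expanding numerators through $O(1)$, and simplifying using the identities $\ell_0+u_0=0$ and $(1-\ell_0)(1-u_0)=4\sqrt{\lambda}(1-\sqrt{\lambda})$.
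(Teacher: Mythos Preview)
Your plan coincides with the paper's own (very brief) proof, which merely records the leading-order limits $u(n,N,4)=2\sqrt{\lambda}-1+o(1)$, $\ell(n,N,4)=1-2\sqrt{\lambda}+o(1)$, and $a_0=o(1)$, and asserts that the corollary follows by substituting these into~\eqref{ub_deg3}. You supply considerably more detail than the paper does, computing the $1/n$ corrections to $u$ and $\ell$ and establishing the sharper estimate $a_0=O(n^{-2})$.

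One step in your outline, however, needs more care. You assert that the leading summand $N(N-1)h(a_0)$ equals $h(0)N^2-h(0)N$ ``up to lower-order corrections,'' implicitly corrections smaller than $\sqrt{N}$. But your own expansions give the numerator of $a_0$ in~\eqref{a_opt_ub_deg3} as $(4\sqrt{\lambda}-6\lambda)\,n+O(1)$ and the denominator as $\lambda u_0^{2}n^{3}+O(n^{2})$, so in fact $a_0\sim(4\sqrt{\lambda}-6\lambda)/(\lambda u_0^{2}n^{2})$ with a \emph{nonzero} leading coefficient for every $\lambda\in[1/2,1)$. Consequently $N(N-1)\bigl[h(a_0)-h(0)\bigr]\sim h'(0)\,N^{2}a_0$ is of order $N$, not $o(\sqrt{N})$. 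Since you argue separately that the two rational summands are only $O(n)=O(\sqrt{N})$, this $O(N)$ piece cannot be absorbed there, and you would need either to exhibit a cancellation you have not yet identified or to revise the bookkeeping. The paper's terse argument does not address this point either.
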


\medskip

\begin{proof} The assertion follows from  Theorem~\ref{Thm6.5} and the asymptotic formulas below:
\begin{eqnarray*}
u(n,N,4) &=& 2\sqrt{\lambda}-1+o(1) \ \ \mbox{(from Lemma~\ref{Lem2.1}(b))}, \\
\ell(n,N,4) &=& 1-2\sqrt{\lambda} +o(1)\ \ \mbox{(from Lemma~\ref{Lem2.2}(b))}, \\
a_0 &=& o(1) \ \ \mbox{(from (\ref{a_opt_ub_deg3}))}.
\end{eqnarray*}
\end{proof}

\medskip


\section{Some asymptotic lower bounds}

We consider the bounds (\ref{bound_odd})
in the asymptotic process where the strength $\tau$ is fixed,
and the dimension $n$ and the cardinality $N$ tend simultaneously to infinity in the relation
\begin{equation}
\label{asy-1}
\lim_{n,N \to \infty} \frac{N}{n^{k-1}}=\frac{2}{(k-1)!}+\gamma
\end{equation}
(here $\gamma \geq 0$ is a constant and the term $\frac{2}{(k-1)!}$ comes from the Delsarte-Goethals-Seidel bound).

\medskip

\begin{theorem}\label{Thm7.1} Let $h$ be absolutely monotone on $[-1,1]$ and $\tau$ be fixed.   If $n$ and $N$ tend to infinity as in {\rm  \eqref{asy-1}}, then  
\begin{equation}
\label{l-bound-a1}
 \mathcal{L}(n,N,\tau;h)\ge  h(0) N^2+o(N^2).
\end{equation}
\end{theorem}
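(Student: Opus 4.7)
The plan is to combine the universal lower bound of Theorem~\ref{Thm3.4} with the observation that under the scaling \eqref{asy-1} the Levenshtein quadrature nodes concentrate at the origin. Since $D(n,2k-1)\sim 2n^{k-1}/(k-1)!$, the scaling \eqref{asy-1} places $N$ in the range $(D(n,2k-1),D(n,2k)]$ for all sufficiently large $n$, so I take $\tau=2k-1$ and apply the odd case of Theorem~\ref{Thm3.4}:
\[ \mathcal{L}(n,N,\tau;h)\ge N^2\sum_{i=0}^{k-1}\rho_i h(\alpha_i). \]
Hence it suffices to show that $\sum_{i=0}^{k-1}\rho_i h(\alpha_i)\to h(0)$ as $n,N\to\infty$.

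The main step is to view
\[ \nu_n := (1-1/N)^{-1}\sum_{i=0}^{k-1}\rho_i\,\delta_{\alpha_i} \]
as a probability measure on $[-1,1]$; this is legitimate because applying the quadrature formula \eqref{defin_f0.1} to $f\equiv 1$ yields $\sum_{i=0}^{k-1}\rho_i=1-1/N$. I would then apply \eqref{defin_f0.1} to $f(t)=t^2$ (which requires $k\ge 2$, the only nontrivial case) to obtain
\[ \sum_{i=0}^{k-1}\rho_i\,\alpha_i^2 = \int_{-1}^1 t^2\,d\sigma_n(t)-\frac{1}{N}, \]
where $d\sigma_n$ is the probability measure on $[-1,1]$ with density proportional to $(1-t^2)^{(n-3)/2}$. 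A short beta-function calculation gives $\int t^2\,d\sigma_n=1/n$, and since \eqref{asy-1} forces $N\ge cn$ for some $c>0$ once $n$ is large, the second moment of $\nu_n$ is at most $O(1/n)$. Chebyshev's inequality then gives $\nu_n([-\varepsilon,\varepsilon]^c)\to 0$ for every $\varepsilon>0$, so $\nu_n\to\delta_0$ weakly.

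Since $h$ is absolutely monotone on $[-1,1]$, it is continuous and bounded there, and the portmanteau theorem yields $\int h\,d\nu_n\to h(0)$. Multiplying by $(1-1/N)\to 1$ and then by $N^2$ gives
\[ N^2\sum_{i=0}^{k-1}\rho_i h(\alpha_i)=N^2(1-1/N)\!\int h\,d\nu_n = N^2\bigl(h(0)+o(1)\bigr)=h(0)N^2+o(N^2), \]
which is \eqref{l-bound-a1}.

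I do not anticipate a substantial obstacle: the entire argument reduces to the elementary moment identity $\int t^2\,d\sigma_n=1/n$ plus a standard Chebyshev/weak-convergence step on the compact interval $[-1,1]$. The even-strength analogue (with the natural scaling $N/n^k\to 2/k!+\gamma$) is handled by the same template using the quadrature \eqref{defin_f0.2}, where the additional input from Lemma~\ref{Lem2.4}, namely $\gamma_0 N<1$, ensures that the fixed endpoint contribution $\gamma_0 h(-1)$ is $O(1/N)=o(1)$ and does not obstruct concentration of the remaining nodes at the origin.
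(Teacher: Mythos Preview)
Your argument is correct and takes a genuinely different route from the paper. The paper's proof imports precise asymptotics for the individual Levenshtein nodes and weights from \cite{BoumDan}---namely $\alpha_i=o(1)$ for $i\ge 1$, $\alpha_0\to -1/(1+\gamma(k-1)!)$, and $\rho_0 N\to (1+\gamma(k-1)!)^{2k-1}$---and then evaluates the sum $\sum_i\rho_i h(\alpha_i)$ term by term. You bypass that external input entirely: applying the quadrature \eqref{defin_f0.1} to $f(t)=t^2$ yields the second-moment identity $\sum_i\rho_i\alpha_i^2=1/n-1/N$ directly, and the Chebyshev/portmanteau step finishes the argument using nothing beyond what is already established in Section~2. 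This is more elementary and self-contained; it also shows that the conclusion does not depend on the detailed location of each $\alpha_i$ (in particular it is irrelevant that $\alpha_0$ does \emph{not} tend to $0$, only that its weight does). What the paper's route buys in exchange is finer structural information about the limiting configuration of nodes and weights, which would matter for a sharper error term but is unnecessary for \eqref{l-bound-a1}. Your treatment of the even case via Lemma~\ref{Lem2.4} (to kill the $\gamma_0 h(-1)$ contribution) is at the same level of detail as the paper's own one-line sketch of that case.
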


\begin{proof} Let $\tau=2k-1$. From \eqref{asy-1} and 
  \cite{BoumDan}, the asymptotic behavior of the
parameters from (\ref{bound_odd}) is as follows: $\alpha_i =o(1),$ for $i = 1, 2, \ldots, k-1$,
$\alpha_0 = -\frac{1}{1+\gamma (k-1)!}+o(1)$, $\rho_0N = (1+\gamma (k-1)!)^{2k-1}+o(1)$ as $N\to \infty$.
Now the lower bound of (\ref{bound_odd}) is easily  calculated:
\begin{eqnarray*}
\mathcal{L}(n,N,2k-1;h) &\geq& N^2 \sum_{i=0}^{k-1} \rho_i h(\alpha_i) \\
&=& N^2\left(\rho_0h(\alpha_0)+h(0)\sum_{i=1}^{k-1} \rho_i\right) +o(N^2)\\
&=& N^2\left(\rho_0(h(\alpha_0)-h(0))+h(0)\left(1-\frac{1}{N}\right)\right)+o(N^2) \\
&=& h(0)N^2+o(N^2).
\end{eqnarray*}
 
Similarly, in the even case $\tau=2k$, we obtain using the appropriate asymptotic relations from \cite{BoumDan},
\begin{eqnarray*}
\mathcal{L}(n,N,2k;h) &\ge& N^2\left(\gamma_0(h(-1)-h(0))+h(0)\left(1-\frac{1}{N}\right)\right) +o(N^2)\\
&=& h(0)N^2+o(N^2) 
\end{eqnarray*}
\end{proof}

\medskip

Note that for $\tau=4$ the main terms in the lower bound (\ref{l-bound-a1}) and the
upper bound (\ref{asymp4des}) coincide.  For odd $\tau$, (\ref{l-bound-a1}) also yields good energy estimates as shown in the next example.


\medskip

\begin{example} \label{Ex7.2} There is standard construction (see \cite[Chapter 5]{CS}) mapping binary codes from the Hamming space $H(n,2)$ to the
sphere $\mathbb{S}^{n-1}$ -- the coordinates $0$ and $1$ are replaced by $\pm 1/\sqrt{n}$, respectively.
Denote by $\overline{x}$ and $\overline{C}$ the images of vector $x \in H(n,2)$ and code $C \subset H(n,2)$.
Then the inner product $\langle \overline{x},\overline{y} \rangle$ on $\mathbb{S}^{n-1}$
and the Hamming distance $d_H(x,y)$ in $H(n,2)$ are connected by
\[ \langle \overline{x},\overline{y} \rangle = 1-\frac{2d_H(x,y)}{n}. \]

Levenshtein \cite[pages 67-68]{Lev3} shows that spherical codes which are obtained by this construction from the
Kerdock codes \cite{Ker72} are asymptotically optimal with respect to their cardinality. So it is natural to
check them for energy optimality.

The Kerdock codes $K_{\ell} \subset H(2^{2\ell},2)$ are nonlinear.
They exist in dimensions $n=2^{2\ell}$ and their cardinality is $N=n^2=2^{4\ell}$. The Hamming distance (weight) distribution
does not depend on the point and is as follows:
\begin{eqnarray*}
A_i &=& 0 \ \mbox{for $i \neq 0, i_1=2^{2\ell-1}-2^{\ell-1}, i_2=2^{2\ell-1}, i_3=2^{2\ell-1}+2^{\ell-1}, n=2^{2\ell}$}, \\
A_0 &=& 1, \\
A_{i_1} &=& 2^{2\ell}(2^{2\ell-1}-1), \\
A_{i_2} &=& 2^{2\ell+1}-2, \\
A_{i_3} &=& 2^{2\ell}(2^{2\ell-1}-1), \\
A_n &=& 1.
\end{eqnarray*}
The weights $0,i_1,i_2,i_3,n$ correspond to the inner products $1,\frac{1}{\sqrt{n}},0,-\frac{1}{\sqrt{n}},-1$, respectively.
The spherical code $\overline{K}_{\ell} \subset \mathbb{S}^{2^{2\ell}-1}$ has energy
\[ E(n,\overline{K}_{\ell};h) = N \left(\left(2^{2\ell+1}-2\right)h(0)+
2^{2\ell}(2^{2\ell-1}-1)\left(h\left(\frac{1}{\sqrt{n}}\right)+h\left(-\frac{1}{\sqrt{n}}\right)\right)+h(-1)\right). \]
When $n$ tends to infinity we obtain
\[  E(n,\overline{K}_{\ell};h) = \frac{N^2}{2}\left( h(\frac{1}{\sqrt{n}})+h(-\frac{1}{\sqrt{n}})\right)+O(N) =h(0)N^2+O(N^{3/2})=h(0)n^4+O(n^3), \]
where we assumed that $h$ is differentiable at 0.  \end{example}


\begin{thebibliography}{99}

\bibitem{And96} N.\,N.\,Andreev, An extremal property of the icosahedron, \emph{East J. Approx.} 2, 459-462, 1996.

\bibitem{And} N.\,N.\,Andreev, Location of points on a sphere with minimal energy, \emph{Tr. Math. Inst. Steklova}
{\bf 219} 27-31, 1997 (in Russian); English translation: {\em Proc. Inst. Math Steklov} {\bf 219}, 20-24, 1997.

\bibitem{And99} N.\,N.\,Andreev, A spherical code, \emph{Uspekhi Mat. Nauk} 54, 255-256, 1999 (in Russian);
English translation: {\em Russian Math. Surveys} {\bf 54}, 251-253, 1999.

\bibitem{BD1} E.\,Bannai, R.\,M.\,Damerell, Tight spherical designs I,
{\it J. Math. Soc. Japan} {\bf 31}, 1979, 199-207.

\bibitem{BD2} E.\,Bannai, R.\,M.\,Damerell, Tight spherical designs II,
{\it J. London Math. Soc.} {\bf 21}, 1980, 13-30.

\bibitem{BBMQ} B.\,Beckermann, J.\,Bustamante, R. Martinez-Cruz, J. Quesada,
Gaussian, Lobatto and Radau positive quadrature rules with a prescribed abscissa, {\em Calcolo} {\bf 51}, 319--328, (2014).

%
%

\bibitem{BRV1} A.\,Bondarenko, D.\,Radchenko, M. Viazovska,  Optimal asymptotic bounds for spherical
designs, \emph{Ann. Math.} {\bf 178}, 2013, 443-452.

\bibitem{BRV2} A.\,Bondarenko, D.\,Radchenko, M. Viazovska,  Well-separated spherical designs,
\emph{Constr. Approx.} {\bf 15}, 2015, 93--112.



\bibitem{BHSbook}
S.\,Borodachov, D.\,Hardin, E.\,Saff,
\emph{Minimal Discrete Energy on Rectifiable Sets},
manuscript.

\bibitem{Bou01} S.\,Boumova, Applications of polynomials to spherical codes
and designs, PhD Dissert., TU Eindhoven, 2002.

\bibitem{BBKS} S.\,Boumova, P.\,Boyvalenkov, H.\,Kulina, M.\,Stoyanova,
Polynomial techniques for investigation of spherical designs,
\emph{Des., Codes Crypt.}, {\bf 51} 275-288, 2009.

\bibitem{BoumDan} S.\,Boumova, D.\,Danev, On the asymptotic behaviour of
a necessary condition for existence of spherical designs, Proc.
Intern. Workshop ACCT, Sept. 8-14, 2002, 54-57.

\bibitem{BBD} P.\,Boyvalenkov, S.\,Boumova, D.\,Danev, Necessary conditions
for existence of some designs in polynomial metric spaces, \emph{Europ. J. Combin.}, {\bf 20} 213-225, 1999.

\bibitem{BDB}
P.\,Boyvalenkov, D.\,Danev, S.\,Bumova, \emph{Upper bounds on the
minimum distance of spherical codes}, IEEE Trans. Inform. Theory \textbf{42},
1996, 1576-1581.

\bibitem{BDL} P.\,Boyvalenkov, D.\,Danev, I.\,Landjev, On
maximal spherical codes II, \emph{J. Combin. Des.} 7, 1999, 316-326.

\bibitem{BDHSS1} P.\,Boyvalenkov, P.\,Dragnev, D.\,Hardin, E.\,Saff, M.\,Stoyanova,
Universal energy bounds for potential energy of spherical codes,  (submitted).

\bibitem{CK} H.\,Cohn, A.\,Kumar,
Universally optimal distribution of points on spheres,
\emph{J. of Amer. Math. Soc.}, {\bf 20} no. 1, 99-148, 2007.

\bibitem{CS} J.\,H.\,Conway, N.\,J.\,A.\,Sloane, {\it Sphere Packings, Lattices
and Groups}, Springer-Verlag, New York, 1988.

\bibitem{D} P.\,J.\,Davis, \emph{Interpolation and Approximation}, Blaisdell Publishing Company, New York,
1963.

\bibitem{D1} P.\,Delsarte, {\it An Algebraic Approach to the
Association Schemes in Coding Theory}, Philips Res. Rep. Suppl. 10, 1973.

\bibitem{DGS}
P.\,Delsarte, J.-M.\,Goethals, J.\,J.\,Seidel, Spherical codes and designs,
\emph{Geom. Dedicata,} {\bf 6} 363-388, 1977.

\bibitem{Dr} P. D. Dragnev,  Log optimal configurations on the sphere, \emph{Contemporary Math}, AMS.


\bibitem{FL} G.\,Fazekas, V.\,Levenshtein, On the upper bounds for
code distance and covering radius of designs in polynomial metric
spaces, \emph{J. Comb. Theory A} {\bf 70}, 1995, 267-288.

\bibitem{HS} D.\,P.\,Hardin, E.\,B.\,Saff, Discretizing manifolds via minimum energy points,
\emph{Notices Amer. Math. Soc.}, {\bf 51}, no. 10, 1186-1194, 2004.

\bibitem{H} K.\,Hesse, The $s$-energy of spherical designs on $\mathbb{S}^2$,
\emph{Adv. Comput. Math.}, {\bf 30}, 37-59, 2009.

\bibitem{HL} K.\,Hesse, P.\,Leopardi, The Coulomb energy of spherical designs on $\mathbb{S}^2$,
\emph{Adv. Comput. Math.}, {\bf 28}, 331-354, 2008.

\bibitem{KL} G. A. Kabatyanskii, V. I. Levenshtein, Bounds for packings
on a sphere and in space, {\it Probl. Inform. Transm.} {\bf 14}
(1989), 1-17.

\bibitem{Ker72} A.\,M.\,Kerdock, A class of low-rate nonlinear binary codes,
{\it Inform. and Control} 20, 1972, 182-187.

\bibitem{KY94} A.\,V.\,Kolushov, V.\,A.\,Yudin, On the Korkin-Zolotarev construction,
\emph{Diskret. Mat.} 6, 155-157, 1994 (in Russian) English translation in \emph{Discrete
Math. Appl.} {\bf 4}, 143--146, 1994.

\bibitem{KY} A.\,V.\,Kolushov, V.\,A.\,Yudin, Extremal dispositions of points on the sphere,
\emph{Anal. Math.}, {\bf 23}, 25-34, 1997.

\bibitem{Lev79} V.\,I.\,Levenshtein, On bounds for packings in n-dimensional Euclidean space,
\emph{Dokl. Akad. Nauk SSSR} 245, 1299-1303, 1979 (in Russian); English translation in
\emph{Soviet Math. Dokl.} {\bf 20}, 417-421, 1979.

\bibitem{Lev2} V.\,I.\,Levenshtein, Bounds for packings in metric spaces and
certain applications, {\it Probl. Kibernetiki} {\bf 40}, 1983, 44-110 (in Russian).

\bibitem{Lev3} V.\,I.\,Levenshtein, Designs as maximum codes in polynomial
metric spaces, {\it Acta Appl. Math.} {\bf 25}, 1992, 1-82.


\bibitem{Lev}
V.\,I.\,Levenshtein,
Universal bounds for codes and designs, \emph{Handbook of Coding Theory},
V.\,S.~Pless and W.\,C.~Huffman, Eds., Elsevier, Amsterdam, 1998, Ch.~6, 499--648.

\bibitem{MW} W.\,J.\,Matrin, J.\,S\,Williford, There are finitely many $Q$-polynomial association schemes
with given first multiplicity at least three, \emph{Europ. J. Combin.} {\bf 30}, 2009, 698-704.

\bibitem{Mim} Y.\,Mimura, A construction of spherical 2-designs, \emph{Graphs Combin.}, {\bf  6}, 1990, 369-372.

\bibitem{Sal92} A.\,Sali, On the rigidity of some spherical 2-designs, \emph{Mem. Fac. Sci. Kyushu Univ.}
Ser. A {\bf 47} (1), 1993, 1-14.

\bibitem{Sei69} J.\,J.\,Seidel, Quasiregular two-distance sets, \emph{Nederl. Akad. Wetensch. Proc.}
Ser. A {\bf 72}, 1969, 64-70 (\emph{Indag. Math.} {\bf 31}).

\bibitem{SK} E.\,B.\,Saff, A.\,B.\,J.\,Kuijlaars,
Distributing many points on a sphere, \emph{Math. Intelligencer,} {\bf 19}, 1997, 5-11.

\bibitem{Sze}
G.\,Szeg\H{o}, \emph{Orthogonal Polynomials},
Amer. Math. Soc. Col. Publ., {\bf 23}, Providence, RI, 1939.

\bibitem{Y} V.\,A.\,Yudin, Minimal potential energy of a point system of charges,
\emph{Discret. Mat.} {\bf 4}, 115-121, 1992 (in Russian); English translation: {\em Discr. Math. Appl.} {\bf 3}, 75-81, 1993.
\end{thebibliography}
\end{document}